\newtheorem{theorem}{Theorem}
\newtheorem{lemma}{Lemma}
\newtheorem{proposition}{Proposition}
\newtheorem{corollary}{Corollary}
\newtheorem{definition}{Definition}
\newtheorem{observation}{Observation}
\newenvironment{proof}{\noindent{\bf Proof.}}{$\hfill \Box$ \vspace{10pt}}
\newcommand{\RR}{\mathbb{R}}
\DeclareMathAlphabet{\masc}{U}{eus}{m}{n}                                       
\DeclareMathAlphabet{\mafr}{U}{euf}{m}{n}                                       
\DeclareMathAlphabet{\malf}{OT1}{cmtt}{m}{it}
\newcommand{\Span}{\operatorname*{span}}
\newcommand{\Ker}{\operatorname*{ker}}
\newcommand{\Eig}{\operatorname*{Eig}}
\newcommand{\TM}{{\mafr T}}
\newcommand{\cvec}[1]{{\left(\begin{array}{c}#1\end{array}\right)}}
\newcommand{\textall}{\ \text{for all}\ }
\DeclareMathOperator{\FSud}{FSud}
\newcommand{\FSudUp}[1]{\FSud^{\uparrow #1}}
\newcommand{\Up}[1]{{#1}^{\uparrow}}
\newcommand{\jv}{{\malf 1}}
\newcommand{\Bb}{\mathcal{B}}
\newcommand{\Xb}{\mathcal{X}}
\newcommand{\Eb}{\mathcal{E}}
\newcommand{\Kb}{\mathcal{K}}
\newcommand{\ab}{\allowbreak}
\begin{document}

\title{Spectrum of free-form Sudoku graphs \\ \large (Extended preprint)}

\author{  Mohammad Abudayah \and Omar Alomari  \\
					\emph{mohammad.abudayah@gju.edu.jo} \\
					\emph{omar.alomari@gju.edu.jo}\\
					\emph{German Jordanian University} \\
					\emph{Amman, Jordan} \\\\
					Torsten Sander\footnote{Corresponding author} \\ \emph{t.sander@ostfalia.de​}\\
					\emph{Fakult\"at f\"ur Informatik,} \\
					\emph{Ostfalia Hochschule f\"ur angewandte Wissenschaften} \\
					\emph{Wolfenb\"uttel, Germany}\\
   }

\maketitle

\thispagestyle{empty}

\centerline{\large \bf Abstract}

A free-form Sudoku puzzle is a square arrangement of $m\times m$ cells
such that the cells are partitioned into $m$ subsets (called blocks) of equal cardinality.
The goal of the puzzle is to place integers $1,\ldots,m$ in the cells such that 
the numbers in every row, column and block are distinct. 
Represent each cell by a vertex and add edges between two vertices exactly when the corresponding
cells, according to the rules, must contain different numbers. This yields the associated free-form Sudoku graph.
This article studies the eigenvalues of free-form Sudoku graphs, most notably integrality.
Further, we analyze the evolution of eigenvalues and eigenspaces of such graphs
when the associated puzzle is subjected to a `blow up' operation, scaling the cell grid including its block partition.

{\bf Keywords:} Sudoku, spectrum, eigenvectors

{\bf 2010 Mathematics Subject Classification:} Primary 05C50, Secondary 15A18\\ 

\section{Introduction}

The recreational game of Sudoku has been popular for several years now. Its classic variant is played
on a board with $9\times 9$ cells, subdivided into a $3\times 3$ grid of square blocks containing $3\times 3$ cells each.
Each cell may be empty or contain one of the numbers $1,\ldots,9$. 
A number of cells of each puzzle have been pre-filled by the puzzle creator. The
goal of the puzzle solver is to fill the remaining cells with the numbers $1,\ldots,9$ 
such that in the completed puzzle the number of each cell occurs only once per row, column and block.
An example is shown in Figure \ref{figsudoku}.

Let us call the classical variant the $3$-Sudoku. Its $3^2\times 3^2$ board contains $3^4$ cells.
One can readily generalize the game to $n$-Sudokus by playing on $n^2\times n^2$ board with $n^4$ cells,
subdivided into $n^2$ square blocks with $n^2$ cells each. 
The permitted numbers in the cells now range from $1,\ldots,n^2$, but
the remaining restrictions for a valid solution remain the same. The $2$-Sudoku is also known as a Shidoku.

\begin{figure}
\begin{center}
\resizebox{0.5\hsize}{!}{%
\begin{tikzpicture}
\draw[step=1cm, gray] (0,0) grid (9,9);
\draw[step=3cm, very thick] (0,0) grid (9,9);

\node at (0.5,8.5) {\bf 8};\node at (1.5,8.5) {4}			;\node at (2.5,8.5) {5};
\node at (3.5,8.5) {6}		;\node at (4.5,8.5) {\bf 1}	;\node at (5.5,8.5) {\bf 9};
\node at (6.5,8.5) {\bf 2};\node at (7.5,8.5) {7}			;\node at (8.5,8.5) {3};

\node at (0.5,7.5) {\bf 1};\node at (1.5,7.5) {6}			;\node at (2.5,7.5) {\bf 7};
\node at (3.5,7.5) {\bf 3};\node at (4.5,7.5) {4}			;\node at (5.5,7.5) {2};
\node at (6.5,7.5) {9}		;\node at (7.5,7.5) {8}			;\node at (8.5,7.5) {5};

\node at (0.5,6.5) {\bf 2};\node at (1.5,6.5) {\bf 3}	;\node at (2.5,6.5) {\bf 9};
\node at (3.5,6.5) {\bf 8};\node at (4.5,6.5) {\bf 7}	;\node at (5.5,6.5) {5};
\node at (6.5,6.5) {\bf 4};\node at (7.5,6.5) {\bf 1}	;\node at (8.5,6.5) {\bf 6};

\node at (0.5,5.5) {\bf 9};\node at (1.5,5.5) {\bf 2}	;\node at (2.5,5.5) {4};
\node at (3.5,5.5) {\bf 1};\node at (4.5,5.5) {3}			;\node at (5.5,5.5) {\bf 8};
\node at (6.5,5.5) {\bf 5};\node at (7.5,5.5) {6}			;\node at (8.5,5.5) {\bf 7};

\node at (0.5,4.5) {\bf 3};\node at (1.5,4.5) {7}			;\node at (2.5,4.5) {\bf 1};
\node at (3.5,4.5) {2}		;\node at (4.5,4.5) {5}			;\node at (5.5,4.5) {\bf 6};
\node at (6.5,4.5) {8}		;\node at (7.5,4.5) {\bf 4}	;\node at (8.5,4.5) {9};

\node at (0.5,3.5) {\bf 6};\node at (1.5,3.5) {\bf 5}	;\node at (2.5,3.5) {8};
\node at (3.5,3.5) {\bf 4};\node at (4.5,3.5) {9}			;\node at (5.5,3.5) {\bf 7};
\node at (6.5,3.5) {1}		;\node at (7.5,3.5) {\bf 3}	;\node at (8.5,3.5) {\bf 2};

\node at (0.5,2.5) {\bf 5};\node at (1.5,2.5) {\bf 1}	;\node at (2.5,2.5) {\bf 6};
\node at (3.5,2.5) {\bf 7};\node at (4.5,2.5) {2}			;\node at (5.5,2.5) {\bf 4};
\node at (6.5,2.5) {3}		;\node at (7.5,2.5) {9}			;\node at (8.5,2.5) {\bf 8};

\node at (0.5,1.5) {4}		;\node at (1.5,1.5) {8}			;\node at (2.5,1.5) {2};
\node at (3.5,1.5) {9}		;\node at (4.5,1.5) {\bf 6}	;\node at (5.5,1.5) {\bf 3};
\node at (6.5,1.5) {7}		;\node at (7.5,1.5) {\bf 5}	;\node at (8.5,1.5) {\bf 1};

\node at (0.5,0.5) {7}		;\node at (1.5,0.5) {\bf 9}	;\node at (2.5,0.5) {\bf 3};
\node at (3.5,0.5) {\bf 5};\node at (4.5,0.5) {\bf 8}	;\node at (5.5,0.5) {1};
\node at (6.5,0.5) {\bf 6};\node at (7.5,0.5) {\bf 2}	;\node at (8.5,0.5) {4};

\end{tikzpicture}
}
\end{center}
\caption{Example Sudoku puzzle}\label{figsudoku}
\end{figure}

Despite the seemingly recreational character of the game, it offers a surprising number of mathematical facets.
This makes Sudoku an interesting topic for mathematics lessons \cite{EM}, \cite{zbMATH05932384}.
Among the topics touched by Sudoku are problem solving, latin squares, counting, exhausting symmetry and
coloring problems on graphs (see, for example, the introductory book \cite{takingsud}).
Due to this, more and more interesting results about Sudoku have been published in the recent past.
For instance, different approaches for solvers have been
presented in \cite{zbMATH06759817}, \cite{zbMATH06052119}, \cite{zbMATH06142305}.
The combinatorial properties of completed Sudoku squares as a family of Latin squares, in particular the search for
orthogonal pairs have been considered in \cite{zbMATH06751283}, \cite{zbMATH06619799}, \cite{zbMATH06322940}, \cite{zbMATH06092025}.
Other researchers focus on algebraic aspects of Sudoku, especially groups and rings associated with Sudoku
(cf.\ \cite{zbMATH06475256}, \cite{zbMATH06288901}, \cite{zbMATH06172103}).

One of the most intuitive links between Sudoku and mathematics is that the processes of
solving a Sudoku can be interpreted as completing a given partial vertex coloring of
a certain graph (each preassigned color corresponds to a prefilled number in a cell, a so-called clue). 
To this end, we represent each cell of the given Sudoku square by a single
vertex. Two vertices are adjacent if and only if the associated cells must not contain the
same number (according to the rules of the game). 
Figure \ref{figshidokuconstr} (a) depicts the
adjacencies in the neighborhood of an exemplary vertex of the Shidoku graph.
Using this representation, the valid solutions of a given
$n$-Sudoku can be characterized as proper optimal vertex colorings (using $n^2$ colors each)
of the associated Sudoku graph.

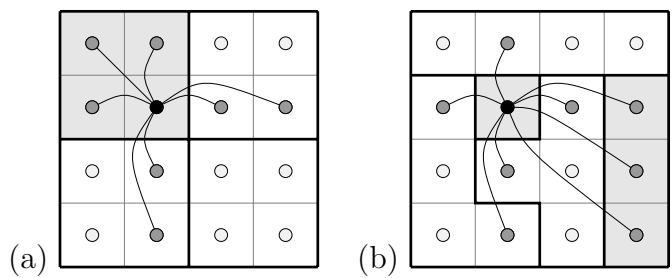
\begin{figure}
\begin{center}(a)
\resizebox{0.25\hsize}{!}{%
\begin{tikzpicture}
\draw[fill=gray!20]    (0,2) -- ++(2,0) -- ++(0,2) -- ++(-2,0); 
\draw[step=1cm, gray] (0,0) grid (4,4);
\draw[step=2cm, very thick] (0,0) grid (4,4);
\draw (1.5,2.5) .. controls (1.25,3.0) .. (1.5,3.5); 	
\draw (1.5,2.5) .. controls (2,2.75) .. (2.5,2.5); 		
\draw (1.5,2.5) .. controls (2.2,3.0) .. (3.5,2.5);		
\draw (1.5,2.5) .. controls (1.0,2.75) .. (0.5,2.5);	
\draw (1.5,2.5) .. controls (1.25,2.0) .. (1.5,1.5);	
\draw (1.5,2.5) .. controls (1.0,1.8) .. (1.5,0.5);		
\draw (1.5,2.5) -- (0.5,3.5);													
\foreach \y in {0,...,3} \foreach \x in {0,...,3} \draw[fill=gray!10] (\x.5,\y.5) circle (0.1cm);
\draw[fill] (1.5,2.5) circle (0.1cm);
\draw[fill=gray!80] (1.5,3.5) circle (0.1cm); 
\draw[fill=gray!80] (2.5,2.5) circle (0.1cm); 
\draw[fill=gray!80] (3.5,2.5) circle (0.1cm); 
\draw[fill=gray!80] (0.5,2.5) circle (0.1cm); 
\draw[fill=gray!80] (1.5,1.5) circle (0.1cm); 
\draw[fill=gray!80] (1.5,0.5) circle (0.1cm); 
\draw[fill=gray!80] (0.5,3.5) circle (0.1cm); 
\end{tikzpicture}}\hspace*{0.5cm}(b)
\resizebox{0.25\hsize}{!}{%
\begin{tikzpicture}
\draw[fill=gray!20]    (1,2) -- ++(1,0) -- ++(0,1) -- ++(-1,0); 
\draw[fill=gray!20]    (3,0) -- ++(1,0) -- ++(0,3) -- ++(-1,0); 
\draw[step=1cm, gray] (0,0) grid (4,4);
\draw[step=4cm, very thick] (0,0) grid (4,4);
\draw[very thick] (2,0) -- (2,1) -- (1,1) -- (1,3) -- (0,3);
\draw[very thick] (1,3) -- (4,3);
\draw[very thick] (3,0) -- (3,3);
\draw[very thick] (1,2) -- (2,2) -- (2,3);
\draw (1.5,2.5) .. controls (1.25,3.0) .. (1.5,3.5); 	
\draw (1.5,2.5) .. controls (2,2.75) .. (2.5,2.5); 		
\draw (1.5,2.5) .. controls (2.2,3.0) .. (3.5,2.5);		
\draw (1.5,2.5) .. controls (1.0,2.75) .. (0.5,2.5);	
\draw (1.5,2.5) .. controls (1.25,2.0) .. (1.5,1.5);	
\draw (1.5,2.5) .. controls (1.0,1.8) .. (1.5,0.5);		
\draw (1.5,2.5) .. controls (1.75,1.85) .. (3.5,0.5);	
\draw (1.5,2.5) .. controls (2.0,2.5) .. (3.5,1.5);		
\foreach \y in {0,...,3} \foreach \x in {0,...,3} \draw[fill=gray!10] (\x.5,\y.5) circle (0.1cm);
\draw[fill] (1.5,2.5) circle (0.1cm);
\draw[fill=gray!80] (1.5,0.5) circle (0.1cm); 
\draw[fill=gray!80] (1.5,1.5) circle (0.1cm); 
\draw[fill=gray!80] (1.5,3.5) circle (0.1cm); 
\draw[fill=gray!80] (0.5,2.5) circle (0.1cm); 
\draw[fill=gray!80] (2.5,2.5) circle (0.1cm); 
\draw[fill=gray!80] (3.5,2.5) circle (0.1cm); 
\draw[fill=gray!80] (3.5,1.5) circle (0.1cm); 
\draw[fill=gray!80] (3.5,0.5) circle (0.1cm); 
\end{tikzpicture}}
\end{center}
\caption{Deriving the graph of a classic and a free-form Shidoku puzzle}\label{figshidokuconstr}
\end{figure}

It is possible to investigate many questions about Sudoku with
the help of graph theory. For example, there has been some research interest in
counting the number of possible completions of a partially colored Sudoku graph. 
A related question is how
to (minimally) partially color the graph such that there is only one way to complete the
coloring to a valid Sudoku solution \cite{zbMATH06246071}. It has even been shown
that $16$ precolored vertices do not  suffice in order to guarantee a unique completion,
however by means of computational brute force \cite{zbMATH06328244}. Not only the number
of clues is a topic of interest but also where to place them \cite{zbMATH05823702}.
Other notions of coloring have been applied to Sudoku graphs as well, for example list coloring \cite{zbMATH05994814}.

Sudoku graphs have also been studied from the perspective of algebraic graph theory.
It has been shown that every Sudoku graph is integral, i.e.\ all eigenvalues of its adjacency
matrix are integers. This has been shown algebraically by means of group characters \cite{semester}.
But it also follows from the fact that Sudoku graphs are actually NEPS of complete graphs, 
hence they belong to the class of gcd-graphs, a subclass of the integral Cayley graphs over abelian groups \cite{sudejc}.
Here, NEPS is the common short form of the non-complete extended $p$-sum, a generalized graph product
that includes many known products \cite{zbMATH03908461}.

There exist many generalizations and variants of the classic Sudoku, the $n$-Sudokus being the most
common one. Besides changing the size of a Sudoku one can also try to vary every other aspect of 
a Sudoku, e.g.\ change the rules, introduce additional rules or change the shape of the blocks.
The book \cite{takingsud} presents many such variants.
Changing the shape of the blocks leads to the notion of a {\em free-form Sudoku}. 
Given a square arrangement of $m\times m$ cells (where $m$ need not be a square number any more), 
we permit the blocks to be an arbitrary partition $T$ of the cells into $m$ subsets of equal cardinality.
Note that the blocks are not required to be contiguous arrangements of cells.
The Sudoku rules remain unchanged, in the sense that we need to fill in the numbers $1, \ldots, m$ in
such that each row, column and block contains $m$ distinct numbers. 
We shall denote the associated graph by $\FSud(m, T)$.
Figure \ref{figshidokuconstr} (b) illustrates the
adjacencies in the neighborhood of an exemplary vertex for the graph of a free-form Shidoku.
The cells of the (non-contiguous) block containing the considered vertex have been shaded.

It seems that free-form Sudokus have not been studied in the  literature so far although they are included in
many Sudoku puzzle books that offer challenging variants.  Computer experiments readily indicate
that integrality of the eigenvalues critically depends on the degree of symmetry exhibited by the chosen
cell partition. However, free-form Sudoku graphs are well-structured enough to exactly predict 
the changes of their spectra and eigenspaces when they are transformed in certain ways.

\section{Integrality}

In the following, consider a given free-form Sudoku graph $\FSud(m, T)$ and let $A$ be its adjacency matrix
(with respect to some arbitrary but fixed vertex order).
The structure of the adjacency matrix of a free-form Sudoku graph is
governed by the rules of Sudoku:
\begin{observation}
Two vertices of a free-form Sudoku graph are adjacent if and only if one of
the following mutually exclusive cases apply:
\begin{itemize}
\item[(B)] The associated cells belong to the same block of the tiling.
\item[(H)] The associated cells belong to the same row of the puzzle, but not to the same block.
\item[(V)] The associated cells belong to the same column of the puzzle, but not to the same block.
\end{itemize}
\end{observation}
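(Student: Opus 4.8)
The plan is to unfold the definition of adjacency in $\FSud(m,T)$ and then sort the resulting condition into the three stated cases. By construction, two distinct cells are joined by an edge precisely when the rules of the puzzle force them to hold different numbers. Since the only constraints are that each row, each column and each block must contain pairwise distinct entries, two cells must differ exactly when they lie in a common row, a common column, or a common block. Thus the first step is simply to record that adjacency is equivalent to the disjunction ``same row \emph{or} same column \emph{or} same block.''

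Next I would refine this disjunction into the three listed cases by giving the block relation priority. If the two cells share a block, we are in case (B); otherwise they do not share a block, and the surviving part of the adjacency condition places them either in a common row (case (H)) or in a common column (case (V)). Conversely, each of the three cases implies one of the three sharing conditions and hence adjacency, so the cases together account for exactly the edge set. This establishes the ``if and only if'' part, leaving only the claim that the cases are mutually exclusive.

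It remains to check that exclusivity. Cases (B) and (H), and likewise (B) and (V), are disjoint by the explicit ``but not to the same block'' clause built into the definitions of (H) and (V). The only point that needs a genuine, if short, argument is the disjointness of (H) and (V): here I would invoke the grid geometry, namely that each cell occupies a unique row--column position, so two \emph{distinct} cells can never share both a row and a column. Consequently a pair of adjacent cells falls into exactly one of (B), (H), (V). This last geometric observation is the only subtlety in an otherwise purely definitional argument, and I expect it to be the one step worth stating explicitly rather than leaving to the reader.
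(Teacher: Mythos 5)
Your argument is correct and coincides with the reasoning the paper leaves implicit: the statement is labelled an Observation precisely because it is the definitional unfolding of adjacency (same row, column, or block) followed by the case split you describe. The one point you single out as needing explicit mention --- that two distinct cells cannot share both a row and a column, so (H) and (V) are disjoint --- is indeed the only non-vacuous part, and your handling of it is fine.
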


The chosen partition into mutually exclusive cases immediately gives rise to a decomposition
of the adjacency matrix according to these three cases:
\begin{equation}\label{eq:decomp1}
A=L_B + L_H + L_V \in \RR^{m^2\times m^2}
\end{equation}

In this sense, the graph $\FSud(m, T)$ can be interpreted as a composition of three layers
of egdes, according to the cases (B), (H), (V). These layers can be viewed and studied as 
graphs of their own right.

\begin{proposition}\label{prop:lhlvmultipart}
For $i,j\in\{1,\ldots,m\}$ let $p_{ij}$ denote the number of cells in the $i$-th row (resp.\ column) of the puzzle
that belong to the $j$-th block of the partition. Then $L_H$ (resp.\ $L_V$) is an adjacency matrix
of the union of $m$ disjoint complete multipartite graphs $K_{p_{i1},\ldots,p_{im}}$, $i=1,\ldots,m$.
\end{proposition}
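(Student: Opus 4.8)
The plan is to interpret $L_H$ as the adjacency matrix of a graph $G_H$ on the same $m^2$ cells (with the same vertex order as $A$) and then to identify $G_H$ explicitly. First I would observe, directly from case (H) of the Observation, that every edge of $G_H$ joins two cells lying in a common row. Consequently no edge of $G_H$ connects cells belonging to two different rows, so the $m$ rows split the vertex set into $m$ mutually non-adjacent blocks of vertices. This already shows that $G_H$ is the disjoint union of its $m$ row-induced subgraphs $G_H^{(1)},\ldots,G_H^{(m)}$, where $G_H^{(i)}$ is induced by the $m$ cells of the $i$-th row.

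Next I would analyze a single row $i$ in isolation. Its $m$ cells are distributed among the $m$ blocks, and by definition block $j$ receives exactly $p_{ij}$ of them, so that $\sum_{j=1}^m p_{ij}=m$. Grouping the cells of row $i$ by block membership therefore yields a partition of the vertex set of $G_H^{(i)}$ into classes $C_{i1},\ldots,C_{im}$ with $|C_{ij}|=p_{ij}$, where classes with $p_{ij}=0$ are empty and contribute no vertices.

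The decisive step is then purely a matter of reading off the adjacency rule: two cells of row $i$ are joined in $G_H$ precisely when they lie in the same row but in different blocks, that is, exactly when they belong to two distinct classes $C_{ij}$ and $C_{ik}$. This is verbatim the defining adjacency relation of the complete multipartite graph with parts $C_{i1},\ldots,C_{im}$, so $G_H^{(i)}\cong K_{p_{i1},\ldots,p_{im}}$. Taking the disjoint union over $i=1,\ldots,m$ gives the claimed description of $L_H$, and the statement for $L_V$ follows by the identical argument with the word ``row'' replaced throughout by ``column''.

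I do not expect a genuine obstacle here, since the proposition is essentially a dictionary translation between the edge rule of case (H) and the definition of a complete multipartite graph. The only points deserving care are the bookkeeping convention that parts of size $p_{ij}=0$ simply do not occur, and the remark that the phrase ``an adjacency matrix of'' absorbs the freedom in vertex ordering, so that $L_H$ need only realize the stated disjoint union up to a simultaneous permutation of rows and columns.
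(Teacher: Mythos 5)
Your proposal is correct and follows essentially the same route as the paper's proof: fix a row, group its cells by block membership, and read off from rule (H) that adjacency holds exactly between distinct groups, yielding $K_{p_{i1},\ldots,p_{im}}$, with the column case handled symmetrically. Your additional explicit remarks — that no edges of $L_H$ cross between different rows, and that parts with $p_{ij}=0$ are simply empty — are points the paper leaves implicit but do not change the argument.
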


\begin{proof}
Consider a fixed row $i$ of the given Sudoku puzzle. We group the cells of the
considered row according to their respective block membership. 
In view of rule (H) we see that the vertices corresponding to the grouped cells
are adjacent if and only if they belong to different blocks.
So the groups of cells form $m$ independent sets of the respective sizes $p_{i1},\ldots,p_{im}$, further
all edges exist between different groups. The reasoning for  $L_V$ is analogous,
just consider a fixed column and rule (V).
\end{proof}

\begin{proposition}\label{prop:lbcomplete}
$L_B$ is an adjacency matrix of the union of $m$ disjoint complete graphs $K_m$.
\end{proposition}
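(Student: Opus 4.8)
The plan is to read off the claim directly from the block structure of the puzzle together with rule (B) of the preceding Observation. First I would record the block sizes: the partition $T$ splits the $m^2$ cells into $m$ blocks of equal cardinality, so each block contains exactly $m^2/m = m$ cells. This count is what will ultimately fix the order of each complete component.

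Next I would examine the edges recorded by $L_B$. By definition, $L_B$ carries precisely the adjacencies of case (B), so a nonzero entry of $L_B$ corresponds exactly to a pair of distinct cells lying in a common block. Fixing one block, rule (B) makes every pair of its $m$ cells adjacent, so the subgraph induced on that block is the complete graph $K_m$. Since distinct blocks are disjoint subsets of the cell set, no two cells from different blocks share a block, and hence $L_B$ records no edge joining different blocks.

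Combining these observations, $L_B$ splits into $m$ mutually non-adjacent cliques, one per block, each on $m$ vertices; that is, $L_B$ is an adjacency matrix of the disjoint union of $m$ copies of $K_m$. There is essentially no obstacle here: the only point needing care is the elementary bookkeeping that an equal-cardinality partition of $m^2$ cells into $m$ blocks forces each block to have exactly $m$ cells. The argument runs parallel to that of Proposition \ref{prop:lhlvmultipart}, the only difference being that for case (B) belonging to a common block produces adjacency (a clique) rather than the non-adjacency (an independent set) exploited for cases (H) and (V).
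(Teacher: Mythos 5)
Your argument is correct and follows essentially the same route as the paper's proof: group vertices by block membership, note that each block has exactly $m$ cells, and apply rule (B) to obtain $m$ disjoint copies of $K_m$. You simply spell out the cardinality bookkeeping and the absence of inter-block edges that the paper leaves implicit.
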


\begin{proof}
Group the vertices according to block membership of their associated cells.
Since each block contains $m$ cells the result follows from rule (B).
\end{proof}

\begin{proposition}\label{prop:regcommute}
The following statements are equivalent:
\begin{enumerate}
\item[(a)] $L_H$ (resp.\ $L_V$) represents a regular graph.
\item[(b)] $L_H$ (resp.\ $L_V$) has constant row sum.
\item[(c)] $L_B$ commutes with $L_H$ (resp.\ $L_V$).
\end{enumerate}
\end{proposition}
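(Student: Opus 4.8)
The equivalence of (a) and (b) is immediate and I would dispose of it first: since $L_H$ is the adjacency matrix of a simple graph, each of its row sums equals the degree of the corresponding vertex, so ``constant row sum'' and ``every vertex has the same degree'' are literally the same condition. To make (b) usable later I would also record its combinatorial meaning via Proposition \ref{prop:lhlvmultipart}: a vertex whose cell lies in row $i$ and block $j$ belongs to the part of size $p_{ij}$ of the complete multipartite graph attached to row $i$, and hence has $L_H$-degree $m-p_{ij}$. Consequently (b) holds precisely when there is a constant $c$ with $p_{ij}=c$ for all $(i,j)$ having $p_{ij}>0$; that is, all nonzero entries of the incidence matrix $P=(p_{ij})$ coincide (for $L_V$ one argues identically with the column--block incidences).

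The substance of the proposition is the equivalence with (c), and here my plan is to compute the commutator entrywise. Writing $r(u)$ and $b(u)$ for the row and block of a vertex $u$, I would first note that $[L_B,L_H]=[L_B+I,L_H]$, where $L_B+I$ has a transparent action: it sums the entries of a vector over each block. Counting the two types of length-two walks then gives the interpretation that $(L_BL_H)_{uv}$ counts walks using a block edge followed by a row edge, and $(L_HL_B)_{uv}$ counts the reverse order. A short case analysis, splitting on whether $b(u)=b(v)$ and checking that the corrections coming from the absent loops cancel, should produce
\[
[L_B,L_H]_{uv} \;=\; p_{\,r(v),\,b(u)} - p_{\,r(u),\,b(v)} \qquad (b(u)\neq b(v)),
\]
with all remaining entries equal to zero. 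The same relation can be read off from the projector viewpoint: (c) is equivalent to the statement that $L_H$ leaves the space of block-constant vectors invariant, and testing $L_H$ on the indicator vector of a single block reproduces exactly these incidence sums.

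It then remains to show that the vanishing of every such entry is equivalent to condition (b). The commutator formula turns (c) into the requirement that
\[
p_{\,r(v),\,b(u)} \;=\; p_{\,r(u),\,b(v)}
\]
for all vertices $u,v$ lying in different blocks, i.e.\ a system of ``crossed'' symmetries of the matrix $P$ indexed by the cell-types $(i,j)$ that actually occur. I expect this final translation to be the main obstacle: one must show that this web of pairwise equalities forces all nonzero entries of $P$ to be equal, and the delicacy is that the condition is entangled with the support pattern of $P$ (which rows meet which blocks), not only with its values. My intended attack is to fix a block $j$, take two rows that both meet $j$, use the symmetries to force their entire incidence rows to agree, and then propagate this agreement along rows and blocks that share support, finally using $\sum_j p_{ij}=\sum_i p_{ij}=m$ to pin the common value. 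The correctness of this propagation is exactly the point that will need the most care, since it is where the geometry of the block partition, rather than mere regularity, enters.
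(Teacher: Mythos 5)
Your preparatory steps are sound: (a) $\Leftrightarrow$ (b) is immediate, the translation of (b) into ``all nonzero entries of $P=(p_{ij})$ coincide'' is right, and your entrywise commutator formula
\[
(L_BL_H-L_HL_B)_{uv}=p_{\,r(v),\,b(u)}-p_{\,r(u),\,b(v)}\ \ \text{for } b(u)\neq b(v),\qquad 0\ \text{otherwise},
\]
is exactly the computation the paper carries out in words. The genuine gap is the step you yourself single out as the main obstacle, and it cannot be closed, because the crossed-symmetry condition is \emph{not} equivalent to constancy of the nonzero $p_{ij}$ --- in either direction. For (c) $\not\Rightarrow$ (b): writing $(i,j)$ for the cell in row $i$ and column $j$, take $m=4$ with blocks $B_1=\{(1,1),(1,2),(2,1),(2,2)\}$, $B_2=\{(1,3),(1,4),(2,3),(2,4)\}$, $B_3=$ row $3$, $B_4=$ row $4$. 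Every crossed pair $p_{r(v),b(u)},\,p_{r(u),b(v)}$ with $b(u)\neq b(v)$ is either $2=2$ (both blocks among $B_1,B_2$) or $0=0$ (otherwise), so $L_B$ and $L_H$ commute; yet $L_H$ has vertices of degree $2$ in rows $1,2$ and degree $0$ in rows $3,4$. Your propagation only transports the common value along rows and blocks with overlapping support, i.e.\ within one connected component of the row--block incidence structure, and nothing forces distinct components to carry the same constant.

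The converse implication (b) $\Rightarrow$ (c), which your write-up does not treat separately, also fails: take $m=4$ with $B_1=\{(1,1),(1,2),(2,1),(2,2)\}$, $B_2=\{(2,3),(2,4),(3,1),(3,2)\}$, $B_3=\{(3,3),(3,4),(4,1),(4,2)\}$, $B_4=\{(4,3),(4,4),(1,3),(1,4)\}$. Here every nonzero $p_{ij}$ equals $2$, so $L_H$ is $2$-regular, but for $u=(1,1)$, $v=(2,3)$ one computes $(L_BL_H)_{uv}=p_{2,1}=2$ while $(L_HL_B)_{uv}=p_{1,2}=0$. Constancy of the nonzero values says nothing about the \emph{support} of $P$ being symmetric under the exchange $(r(u),b(v))\leftrightarrow(r(v),b(u))$, which is what (c) actually requires. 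So the honest characterization of (c) is precisely your crossed-symmetry condition on $P$ (values and support together), and Proposition \ref{prop:regcommute} is false as stated except for (a) $\Leftrightarrow$ (b). You should know that the paper's own proof makes exactly the two leaps you were worried about (``but then the two counts \dots are identical'' and ``hence there exists a common number $k$''), so you have located the weak point precisely --- but it is a hole in the statement, not one your propagation argument could have repaired.
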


\begin{proof}
Statements (a) and (b) are clearly equivalent. Next we show that statement (b) implies (c).
To this end, note that $(L_H)_{is}(L_B)_{sj}=1$ if and only if 
$(L_H)_{is}=(L_B)_{sj}=1$. This translates to the requirement that
the cells associated with vertices $i$ and $s$ are in the same row (but not the same block) and that
the cells associated with vertices $s$ and $j$ belong to same block.
Therefore, $(L_HL_B)_{ij}$ counts the vertices $s$ that meet the mentioned
requirement for given $i$, $j$.
In a similar manner we see that $(L_BL_H)_{ij}$ counts the vertices $s$ 
such that 
the cells associated with vertices $i$ and $s$ belong to same block and that
the cells associated with vertices $s$ and $j$ are in the same row (but do not belong to the same block).
If, however, $L_H$ has constant row sum, then it follows by rule (H) that
each cell has the same common number of cells that are in the same row but not in the same block as the considered cell.
Equivalently, since all rows have the same number of cells, each cell has the same common number $k$ of cells 
that are in the same row and in the same block as the considered cell. Therefore,
any block that has cells in a row must have exactly $k$ cells in that row. 
But then the two counts mentioned before are identical: 
$(L_HL_B)_{ij}=(L_BL_H)_{ij}$.

Conversely, assume that statement (c) is true. Consider two vertices $i$ and $j$ such that
their associated cells belong to different rows and different blocks.
Then $(L_BL_H)_{ij}=(L_HL_B)_{ij}$ implies that these two blocks have the same number of cells in the two rows.
Hence there exists a common number $k$ such that
any block that has cells in a row must have exactly $k$ cells in that row. 
Considering a single row of $L_H$, it follows that it has row sum $m-k$ since the puzzle row of
the cell associated with that matrix row contains $m$ cells and among them $k$ cells
that belong to the same block as the considered cell.
\end{proof}

Considering the third statement of Proposition \ref{prop:regcommute}, note that 
$L_H$ and $L_V$ need not commute even when both $L_H$ and $L_V$ have constant row sum,
cf.\ the graph $\FSud(4,T)$ for $T=\{\{1,8,9,16\}, \{3,6,10,15\}, \{2,7,11,14\}, \{4,5,12,13\}\}$ 
(here we number the cells from left to right, row after row).

\begin{theorem}[see \cite{CD}]
The characteristic polynomial of the complete multipartite graph $G=K_{p_1,\ldots,p_k}$
is
\[
\chi(G) = x^{n-k}\left( x^k - \sum\limits_{m=2}^k(m-1)\sigma_m x^{k-m}\right),
\]
where $\sigma_1 = \sum_{1\leq i \leq k} p_i$, $\sigma_2 = \sum_{1\leq i < j \leq k} p_ip_j$,
$\sigma_3 = \sum_{1\leq i<j<l\leq k}p_i p_j p_l$ and so on up to $\sigma_k = \prod_{1\leq i \leq k} p_i$.
\end{theorem}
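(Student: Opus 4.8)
The plan is to exploit the block structure that the adjacency matrix $A$ of $G=K_{p_1,\ldots,p_k}$ acquires once the vertices are ordered part by part: every diagonal block is zero (a part induces no edges), while the block indexed by distinct parts $a,b$ is the all-ones matrix of size $p_a\times p_b$. Writing $n=\sigma_1=\sum_i p_i$, I would split $\RR^n$ into two orthogonal, $A$-invariant subspaces, namely the space $U$ of vectors supported on a single part and summing to zero there, and its orthogonal complement $W=\Span\{\mathbf{1}_1,\ldots,\mathbf{1}_k\}$, where $\mathbf{1}_a\in\RR^n$ denotes the indicator vector of part $a$. A short computation of $(Av)_i$ for $v\in U$ shows the entry vanishes when $i$ lies in the supporting part (the pertinent block is zero) and when $i$ lies in any other part (the contribution equals the zero part-sum of $v$); hence $U\subseteq\Ker A$. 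Since $\dim U=\sum_a(p_a-1)=n-k$, this already supplies the factor $x^{n-k}$ and reduces everything to the action of $A$ on the $k$-dimensional space $W$.

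On $W$ one finds $A\,\mathbf{1}_a=p_a\sum_{b\neq a}\mathbf{1}_b$, so in the basis $\{\mathbf{1}_1,\ldots,\mathbf{1}_k\}$ the restriction $A|_W$ is represented by the $k\times k$ matrix $M=\mathbf{1}\,\mathbf{p}^{\top}-\mathrm{diag}(\mathbf{p})$, where $\mathbf{p}=(p_1,\ldots,p_k)^{\top}$ and $\mathbf{1}\in\RR^k$ is the all-ones vector. It then suffices to identify $\det(xI-M)$ with the bracketed factor in the statement. Applying the rank-one matrix determinant lemma to $xI-M=\mathrm{diag}(x+p_1,\ldots,x+p_k)-\mathbf{1}\,\mathbf{p}^{\top}$ yields
\[
\det(xI-M)=\prod_{a}(x+p_a)-\sum_{a}p_a\prod_{b\neq a}(x+p_b).
\]

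The decisive step is to massage this expression into the symmetric-function form. Setting $P(x)=\prod_a(x+p_a)=\sum_{m=0}^k\sigma_m x^{k-m}$, I would rewrite the second sum as $\sum_a(x+p_a)\prod_{b\neq a}(x+p_b)-x\sum_a\prod_{b\neq a}(x+p_b)=kP(x)-xP'(x)$, using $P'(x)=\sum_a\prod_{b\neq a}(x+p_b)$. Hence $\det(xI-M)=(1-k)P(x)+xP'(x)$, and extracting the coefficient of $x^{k-m}$ gives $\sigma_m\big[(1-k)+(k-m)\big]=(1-m)\sigma_m$. The term $m=0$ contributes $x^k$, the term $m=1$ vanishes (in agreement with $\operatorname{tr}M=0$), and for $m\ge 2$ one is left with $-(m-1)\sigma_m x^{k-m}$, which is precisely the claimed polynomial. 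The only point that needs genuine care is this coefficient bookkeeping; the invariant-subspace reduction and the determinant computation are routine once the block structure is in place.
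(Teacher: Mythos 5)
Your argument is correct, but note that the paper does not prove this theorem at all: it is imported verbatim from the reference \cite{CD} and used as a black box (only its specialization to $K_{q,\ldots,q}$ is worked out in Corollary \ref{cor:multintegral}). So there is no in-paper proof to compare against; what you have supplied is a valid self-contained derivation by the standard quotient-matrix (equitable partition) technique. The two halves both check out: the span of the within-part, sum-zero vectors is an $(n-k)$-dimensional subspace of $\Ker(A)$, its orthogonal complement $W=\Span\{\mathbf{1}_1,\ldots,\mathbf{1}_k\}$ is $A$-invariant by symmetry of $A$, and the matrix of $A|_W$ in the (non-orthonormal, but that is irrelevant for the characteristic polynomial) indicator basis is indeed $\mathbf{1}\mathbf{p}^{\top}-\mathrm{diag}(\mathbf{p})$. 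The determinant lemma gives $\prod_a(x+p_a)-\sum_a p_a\prod_{b\neq a}(x+p_b)$ as a polynomial identity (the invertibility caveat disappears by continuity/Zariski density), and your rewriting as $(1-k)P(x)+xP'(x)$ with coefficient $(1-m)\sigma_m$ on $x^{k-m}$ reproduces exactly $x^k-\sum_{m=2}^k(m-1)\sigma_m x^{k-m}$, including the vanishing $m=1$ term. One purely cosmetic quibble: the set of vectors ``supported on a single part and summing to zero'' is not itself a subspace; you mean its span (equivalently the direct sum over parts of the sum-zero subspaces), which is how you use it when computing $\dim U=\sum_a(p_a-1)$.
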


\begin{corollary}\label{cor:multintegral}
Let $G=K_{q,\ldots,q}$ be a complete $k$-partite graph. Then its spectrum is
\[
\sigma(G)=\left\{  0^{(kq-k)}, (k-1)q^{(1)}, -q^{(k-1)} \right\}.
\]
\end{corollary}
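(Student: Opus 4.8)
The plan is to apply the general characteristic polynomial formula from the preceding theorem to the special case $G=K_{q,\ldots,q}$ with $k$ parts, each of size $p_i=q$, and then simply read off the roots. First I would record the total number of vertices $n=kq$, so that the factor $x^{n-k}$ becomes $x^{kq-k}$, immediately accounting for the eigenvalue $0$ with multiplicity at least $kq-k$. The remaining work is to evaluate the elementary symmetric polynomials $\sigma_m$ when all $p_i$ equal $q$: here $\sigma_m = \binom{k}{m} q^m$, since each of the $\binom{k}{m}$ products of $m$ distinct variables equals $q^m$. Substituting this into the bracketed factor $x^k - \sum_{m=2}^k (m-1)\binom{k}{m} q^m x^{k-m}$ reduces the problem to showing that this degree-$k$ polynomial factors as $\bigl(x-(k-1)q\bigr)\bigl(x+q\bigr)^{k-1}$.

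The cleanest way to establish this factorization is not to expand the symmetric-function sum directly but to exhibit the eigenvalues $(k-1)q$ and $-q$ independently and count multiplicities, which also confirms that no further zero eigenvalues are hidden. Since $G$ is a complete $k$-partite graph with equal part sizes, it is $(k-1)q$-regular, so the all-ones vector is an eigenvector for the eigenvalue $(k-1)q$ with multiplicity $1$. For the eigenvalue $-q$, I would observe that the adjacency matrix of $K_{q,\ldots,q}$ can be written as $J_{kq} - I_{q}\otimes J_q$ (the all-ones matrix minus the block-diagonal of within-part all-ones blocks), and that any vector which is constant on each part and sums to zero across parts, or more simply any vector supported on differences within the structure, yields $-q$. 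Counting dimensions, the space orthogonal to both the all-ones vector and to the $0$-eigenspace has dimension $k-1$, forcing the multiplicity of $-q$ to be exactly $k-1$.

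A slightly more self-contained alternative, which I would probably prefer for brevity, is to verify the claimed spectrum directly against the polynomial: substitute $x=(k-1)q$ and $x=-q$ into $x^k - \sum_{m=2}^k (m-1)\binom{k}{m} q^m x^{k-m}$ and check each is a root, using the binomial-type identities $\sum_{m=0}^k \binom{k}{m} t^m = (1+t)^k$ and its derivative $\sum_{m=1}^k m\binom{k}{m} t^{m-1} = k(1+t)^{k-1}$ to collapse the sum $\sum_m (m-1)\binom{k}{m} q^m x^{k-m}$ in closed form. Matching degrees and leading coefficients then pins down the full factorization, and the multiplicities follow from the exponents.

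The main obstacle will be the bookkeeping in the symmetric-function evaluation: one must be careful that the sum in the cited theorem starts at $m=2$ rather than $m=0$ or $m=1$, so the closed-form expression obtained from $(1+t)^k$ must be corrected by subtracting the $m=0$ and $m=1$ terms before comparing. Getting the signs and the shifted index right is where an error is most likely to creep in, so I would cross-check the final answer against the smallest case $k=2$ (where $K_{q,q}$ is known to have spectrum $\{q, 0^{(2q-2)}, -q\}$) to confirm the formula specializes correctly.
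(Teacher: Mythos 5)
Your proposal is correct. Its first paragraph is essentially the paper's proof: substitute $p_i=q$ into the cited characteristic polynomial, note $\sigma_m=\binom{k}{m}q^m$, and factor the degree-$k$ bracket as $\left(x-(k-1)q\right)(x+q)^{k-1}$. The paper does exactly this, simply displaying the factorization (via the intermediate factor $x^{k-1}+\binom{k-1}{1}qx^{k-2}+\cdots+q^{k-1}=(x+q)^{k-1}$) without further justification. Where you genuinely diverge is in how you propose to \emph{certify} that factorization: your eigenvector argument --- the all-ones vector for $(k-1)q$ by $(k-1)q$-regularity, vectors constant on parts and summing to zero for $-q$, and vectors summing to zero within each part for $0$ --- is a clean, self-contained alternative that in fact makes the cited theorem unnecessary, and it gives the multiplicities by a direct dimension count $1+(k-1)+(kq-k)=kq$. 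That route is arguably more robust than the paper's unexplained polynomial identity. One small slip: the adjacency matrix of $K_{q,\ldots,q}$ is $J_{kq}-I_k\otimes J_q$, not $J_{kq}-I_q\otimes J_q$ (the block-diagonal has $k$ blocks of size $q$); and note that your $0$-eigenvectors are those summing to zero \emph{within} each part, which is what makes the orthogonal-complement count come out to $k-1$. Your third suggestion (checking the roots via binomial identities) would also work but requires the index-shift bookkeeping you already flag; the $k=2$ sanity check $\{q,0^{(2q-2)},-q\}$ is a good safeguard either way.
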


\begin{proof}
	\begin{align*}
		\chi(G) &= x^{kq-k} \left( x^k - \binom{k}{2} q^2 x^{k-2} - 2q^3 \binom{k}{3} x^{k-3} - 3 q^4 \binom{k}{4} x^{k-4} - \ldots +(k-1) q^k\right)\\
		&= x^{kq-k} \left( x-(k-1)q \right) \left(x^{k-1} + \binom{k-1}{1} x^{k-2} q + \binom{k-1}{2} x^{k-3} q^2 + \ldots + q^{k-1}\right)\\
		&= x^{kq-k}(x-(k-1)q) (x+q)^{k-1}
	\end{align*}
\end{proof}

\begin{theorem}\label{thm:fsudint}
Given a free-form Sudoku puzzle, assume that the following conditions are met:
\begin{itemize}
\item[(i)] There exists a common number $q$ such that for each cell 
there exist exactly $q$ cells belonging to the same row and block as the considered cell (including itself).
\item[(ii)] A similar condition holds with `row' replaced by `column'. 
\item[(iii)] For any two cells $C_1$, $C_2$ belonging to different rows and columns,
the unique cell lying in the same row as $C_1$ and the same column as $C_2$ belongs to the same
block as $C_1$ if and only if exactly the same holds for 
the unique cell lying in the same column as $C_1$ and the same row as $C_2$.
\end{itemize}
Then the associated graph $\FSud(m, T)$ is integral.
\end{theorem}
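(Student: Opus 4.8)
The plan is to exploit the layer decomposition $A = L_B + L_H + L_V$ from \eqref{eq:decomp1} and to show that, under hypotheses (i)--(iii), the three symmetric matrices $L_B$, $L_H$, $L_V$ pairwise commute. Commuting real symmetric matrices are simultaneously diagonalizable, so they admit a common orthonormal eigenbasis $v_1,\ldots,v_{m^2}$. Each $v_k$ is then an eigenvector of $A$ with eigenvalue $\lambda_B^{(k)}+\lambda_H^{(k)}+\lambda_V^{(k)}$, where $\lambda_B^{(k)},\lambda_H^{(k)},\lambda_V^{(k)}$ are the respective eigenvalues of $L_B$, $L_H$, $L_V$ on $v_k$. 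Hence, if each of the three layers is individually integral, every eigenvalue of $A$ is a sum of three integers and therefore an integer. So the proof reduces to two independent tasks: establishing pairwise commutativity, and establishing integrality of each layer.

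For commutativity, hypothesis (i) states precisely that every cell has exactly $q$ cells (itself included) in its own row and block; this is the ``constant row sum'' condition for $L_H$ identified in the proof of Proposition~\ref{prop:regcommute}, so that result yields $L_BL_H=L_HL_B$. Symmetrically, (ii) gives $L_BL_V=L_VL_B$. The remaining relation $L_HL_V=L_VL_H$ is exactly where hypothesis (iii) enters, and I would prove it entrywise. For a fixed pair of vertices $i,j$, the term $(L_H)_{is}(L_V)_{sj}$ is nonzero only for the unique cell $s$ lying in the row of $i$ and the column of $j$; likewise $(L_VL_H)_{ij}$ is governed by the unique cell in the column of $i$ and the row of $j$. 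When the cells $C_i,C_j$ share a row or column (or coincide), both candidate cells collapse onto $i$ or $j$, forcing a zero diagonal entry of $L_V$ or $L_H$, so both products vanish and the entries agree trivially. When $C_i,C_j$ lie in different rows and columns, writing $P$ and $Q$ for the two witness cells, one has $(L_HL_V)_{ij}=1$ iff $P$ avoids both $\mathrm{block}(C_i)$ and $\mathrm{block}(C_j)$, and $(L_VL_H)_{ij}=1$ iff $Q$ avoids both blocks. Applying (iii) once with $(C_1,C_2)=(C_i,C_j)$ and once with the roles reversed gives $P\in\mathrm{block}(C_i)\Leftrightarrow Q\in\mathrm{block}(C_i)$ and $P\in\mathrm{block}(C_j)\Leftrightarrow Q\in\mathrm{block}(C_j)$, so the two indicators coincide and $L_HL_V=L_VL_H$.

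For integrality of the layers, Proposition~\ref{prop:lbcomplete} identifies $L_B$ with a disjoint union of $m$ copies of $K_m$, whose spectrum $\{(m-1)^{(m)},(-1)^{(m^2-m)}\}$ is integral. For $L_H$, hypothesis (i) forces the cells of each row to partition into groups of equal size $q$, one per block meeting that row, so by Proposition~\ref{prop:lhlvmultipart} each row component is the balanced complete multipartite graph $K_{q,\ldots,q}$ on $m/q$ parts; Corollary~\ref{cor:multintegral} shows its spectrum is integral, and the same argument with (ii) handles $L_V$. Combining the two tasks then establishes that every eigenvalue of $A$ is integral.

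The step I expect to be the main obstacle is verifying $L_HL_V=L_VL_H$: the note following Proposition~\ref{prop:regcommute} shows that regularity of both $L_H$ and $L_V$ is by itself insufficient, so (iii) is genuinely needed, and the delicate part is the careful case analysis of the witness cells $P$, $Q$ (including the degenerate same-row, same-column, and diagonal cases) together with the correct two-fold application of the biconditional in (iii).
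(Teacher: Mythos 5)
Your proposal is correct and follows essentially the same route as the paper: the decomposition $A=L_B+L_H+L_V$, commutativity of the pairs via Proposition~\ref{prop:regcommute} and condition (iii), simultaneous diagonalization, and integrality of each layer via Propositions~\ref{prop:lhlvmultipart}, \ref{prop:lbcomplete} and Corollary~\ref{cor:multintegral}. The only difference is that you spell out the entrywise verification that (iii) is equivalent to $L_HL_V=L_VL_H$ (including the degenerate cases and the two applications of the biconditional), a step the paper merely asserts; your argument for it is sound.
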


\begin{proof}
Under condition (i) we have $p_{ij}\in\{0,q\}$ for all $i,j\in\{1,\ldots,m\}$ in
Proposition \ref{prop:lhlvmultipart}. Therefore, $L_H$ represents a union of
complete multipartite graphs $K_{q,\ldots,q}$. Hence $L_H$ is integral by Corollary \ref{cor:multintegral}.
Likewise, condition (ii) ensures that $L_V$ is integral.
$L_B$ is integral by \ref{prop:lbcomplete} and Corollary \ref{cor:multintegral} (for $r=1$).

We can conclude from Proposition \ref{prop:regcommute} and conditions (i) and (ii) 
that the pairs $L_H, L_B$ and $L_V, L_B$ commute.
Condition (iii) is equivalent to the condition that $L_H$ and $L_V$ commute.
Hence it follows that $L_H$, $L_V$, $L_B$ are simultaneously diagonizable.
Thus their sum $A=L_H+L_V+L_B$ is integral as well.
\end{proof}

It is easily checked that the classical $n$-Sudokus fulfill the conditions of Theorem \ref{thm:fsudint}.
Hence the theorem provides a new way of proving integrality of classical Sudoku graphs (e.g.\ different from the proofs in
\cite{semester}, \cite{sudejc}). Further, Theorem \ref{thm:fsudint} helps us identify many more
integral free-form Sudoku graphs besides the classical variants.

\section{Transformations}

In this section we consider a special kind of transformation of Sudokus puzzles and study its spectral properties.
To this end, we define the $k$-fold blow up of a free-form Sudoku. This is formed by replacing each cell of the original
Sudoku by a $k\times k$ arrangement of cells. The block partition $T'$ of the new graph is
derived from the original partition $T$ as follows. For each block $B\in T$ we create a block
$B'\in T'$ by collecting the replacement cells of all the cells in $B$.
In terms of graphs we see that the $k$-fold blow up transforms $\FSud(n,T)$ into $\FSud(kn,T')$.
Let us denote the latter graph by $\FSudUp{k}(n,T)$.
In Figure \ref{figshidoku3blowup} the $3$-fold blow up of a free-form Shidoku is illustrated. 
It also sketches the neighborhood of an exemplary vertex of the blown up graph.

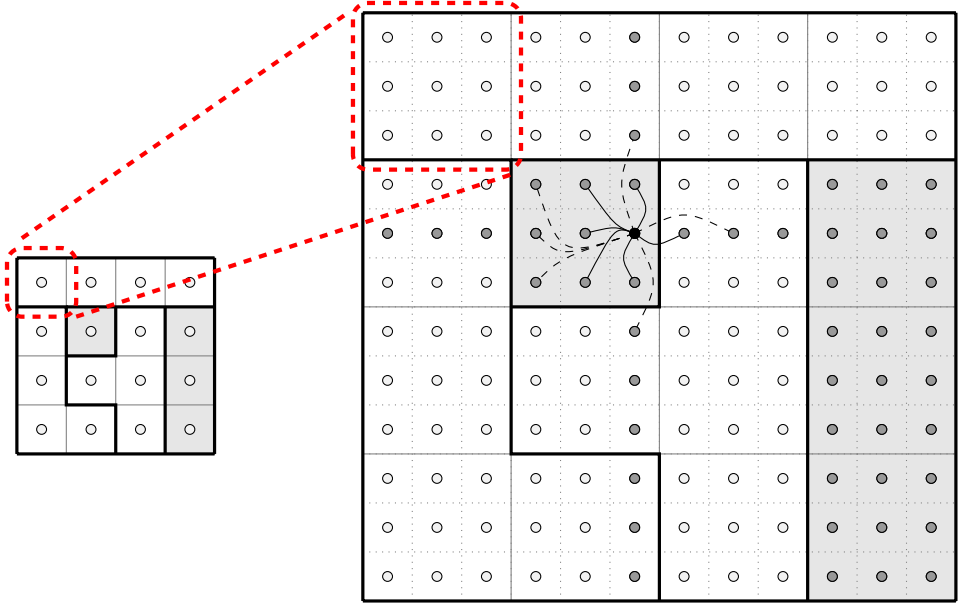
\begin{figure}
\begin{center}
\tikzset{
	littleone/.pic={
		\draw[fill=gray!20]    (1,2) -- ++(1,0) -- ++(0,1) -- ++(-1,0); 
		\draw[fill=gray!20]    (3,0) -- ++(1,0) -- ++(0,3) -- ++(-1,0); 
		\draw[step=1cm, gray] (0,0) grid (4,4);
		\draw[step=4cm, very thick] (0,0) grid (4,4);
		\draw[very thick] (2,0) -- (2,1) -- (1,1) -- (1,3) -- (0,3);
		\draw[very thick] (1,3) -- (4,3);
		\draw[very thick] (3,0) -- (3,3);
		\draw[very thick] (1,2) -- (2,2) -- (2,3);
		\foreach \y in {0,...,3} \foreach \x in {0,...,3} \draw[fill=gray!10] (\x.5,\y.5) circle (0.1cm);
		\draw[ultra thick, red, dashed, rounded corners=2mm] (-0.2,2.8) rectangle ++(1.4,1.4);
	}
}
\begin{tikzpicture}[scale=0.65]
\begin{scope}[shift={(-7,3)}, scale=1, every node/.append style={transform shape}] 
\pic{littleone};
\end{scope}
\draw[fill=gray!20]    (3,6) -- ++(3,0) -- ++(0,3) -- ++(-3,0); 
\draw[fill=gray!20]    (9,0) -- ++(3,0) -- ++(0,9) -- ++(-3,0); 
\draw[step=1cm, gray, dotted] (0,0) grid (12,12);
\draw[step=3cm, gray] (0,0) grid (12,12);
\draw[step=12cm, very thick] (0,0) grid (12,12);
\draw[very thick] (6,0) -- (6,3) -- (3,3) -- (3,9) -- (0,9);
\draw[very thick] (3,9) -- (12,9);
\draw[very thick] (9,0) -- (9,9);
\draw[very thick] (3,6) -- (6,6) -- (6,9);
\draw (5.5,7.5) .. controls ++(0.3,0.5) .. ++(0,1); 	
\draw[dashed] (5.5,7.5) .. controls ++(-0.35,1.0) .. ++(0,2); 	
\draw (5.5,7.5) .. controls ++(0.4,-0.3) .. ++(1,0); 	
\draw[dashed] (5.5,7.5) .. controls ++(1.0,0.5) .. ++(2,0); 	
\draw (5.5,7.5) .. controls ++(-0.3,-0.5) .. ++(0,-1); 	
\draw[dashed] (5.5,7.5) .. controls ++(0.5,-1.0) .. ++(0,-2); 	
\draw (5.5,7.5) .. controls ++(-0.4,+0.15) .. ++(-1,0); 	
\draw (5.5,7.5) .. controls ++(-0.4,+0.15) .. ++(-1,-1); 	
\draw (5.5,7.5) .. controls ++(-0.4,+0.15) .. ++(-1,1); 	
\draw[dashed] (5.5,7.5) .. controls ++(-1.5,-0.5) .. ++(-2,0); 	
\draw[dashed] (5.5,7.5) .. controls ++(-1.5,-0.5) .. ++(-2,-1); 	
\draw[dashed] (5.5,7.5) .. controls ++(-1.5,-0.5) .. ++(-2,1); 	
\foreach \y in {0,...,11} \foreach \x in {0,...,11} \draw[fill=gray!10] (\x.5,\y.5) circle (0.1cm);
\foreach \y in {6,...,8} \foreach \x in {3,...,5} \draw[fill=gray!80] (\x.5,\y.5) circle (0.1cm); 
\foreach \y in {0,...,8} \foreach \x in {9,...,11} \draw[fill=gray!80] (\x.5,\y.5) circle (0.1cm);
\foreach \y in {7} \foreach \x in {0,...,11} \draw[fill=gray!80] (\x.5,\y.5) circle (0.1cm);
\foreach \x in {5} \foreach \y in {0,...,11} \draw[fill=gray!80] (\x.5,\y.5) circle (0.1cm);
\draw[fill] (5.5,7.5) circle (0.1cm);
\draw[ultra thick, red, dashed, rounded corners=2mm] (-0.2,8.8) rectangle ++(3.4,3.4);
\draw[ultra thick, red, dashed] (-6.9,7.4) -- ++(6.6,4.6);
\draw[ultra thick, red, dashed] (-5.8,5.8) -- ++(8.8,2.9);
\end{tikzpicture}
\end{center}
\caption{$3$-fold blow up of the free-form Shidoku puzzle from Figure \ref{figshidokuconstr} (b)}\label{figshidoku3blowup}
\end{figure}

Let us now investigate how the adjacency matrix of a blown up Sudoku graph can be determined from the
adjacency matrix of the original graph. To this end, we a assume that a fixed but otherwise arbitrary
numbering of the cells of the original free-form Sudoku is given. The vertices of the associated graph
shall be numbered accordingly. Further, we number the cells of a $k$-fold blow up of the given puzzle
as follows. Let $S_i$ denote the $k\times k$ subsquare of the blown up puzzle containing exactly the replacement cells of the original cell number $i$.
We number the $k^2n^2$ cells by subsequently numbering all vertices in $S_1$, then $S_2$ and so on.
Inside each $S_i$ we number the cells by starting in the top left corner and proceeding from left to right, advancing row by row.

Next, note the following facts about the blow up operation:

\begin{observation}\leavevmode\vspace{-0.5\baselineskip}
\begin{itemize}
\item If two cells $i$ and $j$ lie in the same row (resp.\ column) of the original puzzle, then
all cells sharing the same relative row (resp.\ column) index inside $S_i$ and/or $S_j$ lie in the same row (resp.\ column) of the blown up puzzle.
\item If two cells $i$ and $j$ lie in the same block of the original puzzle, then
all cells from $S_i$ and $S_j$ lie in the same block of the blown up puzzle.
\end{itemize}
\end{observation}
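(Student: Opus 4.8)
The plan is to reduce both bullet points to explicit coordinate bookkeeping, since the numbering of the blow-up has already been fixed. First I would attach to each original cell a coordinate pair: the cell numbered $i$ sits in some row $r_i$ and column $c_i$ of the original $n\times n$ puzzle, with $r_i,c_i\in\{1,\ldots,n\}$. By the geometric meaning of ``replace each cell by a $k\times k$ arrangement,'' the subsquare $S_i$ then occupies precisely the blown-up rows $(r_i-1)k+1,\ldots,r_i k$ and the blown-up columns $(c_i-1)k+1,\ldots,c_i k$. Writing a cell of $S_i$ via its relative indices $(a,b)\in\{1,\ldots,k\}^2$ (relative row index $a$ and relative column index $b$ inside $S_i$, counted as in the prescribed numbering), its absolute position in the blown-up puzzle is blown-up row $(r_i-1)k+a$ and blown-up column $(c_i-1)k+b$. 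Establishing this single translation formula is really the whole content of the argument; everything after it is substitution.

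For the first bullet, suppose $i$ and $j$ lie in the same row of the original puzzle, i.e.\ $r_i=r_j=:r$. Then any cell of $S_i$ or $S_j$ carrying relative row index $a$ sits in blown-up row $(r-1)k+a$, an expression depending only on $r$ and $a$ and not on the column; hence all such cells share a single blown-up row. The column assertion is the mirror image: one takes $c_i=c_j=:c$ and observes that cells of $S_i$ or $S_j$ with relative column index $b$ all land in blown-up column $(c-1)k+b$. This disposes of the ``and/or'' formulation, since the cells compared are exactly those of $S_i\cup S_j$ with a common relative row (resp.\ column) index.

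For the second bullet I would simply appeal to the definition of $T'$. If $i$ and $j$ belong to the same original block $B\in T$, then the associated block $B'\in T'$ is by construction the union of the replacement cells of \emph{all} cells of $B$. In particular $S_i\cup S_j\subseteq B'$, so every cell of $S_i$ and every cell of $S_j$ lies in the common block $B'$, as claimed.

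I do not expect a genuine obstacle here: the statement is a direct consequence of how the blow-up and its numbering were set up. The only point demanding care is making the numbering-to-coordinate dictionary precise and parsing the ``resp.'' correctly, so that the row claim is matched with relative row indices and the column claim with relative column indices. Once the translation formula for the absolute position of a cell in $S_i$ is written down, both bullets follow immediately.
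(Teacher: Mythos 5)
Your proof is correct; the paper states this as an Observation with no proof at all, treating it as self-evident from the definition of the blow-up and the chosen numbering. Your coordinate dictionary $(r_i-1)k+a$, $(c_i-1)k+b$ and the appeal to the definition of $T'$ simply make explicit exactly what the paper takes for granted, so there is nothing to criticize.
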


In view of these facts and due to the chosen cell numbering of the blown up puzzle
we can construct the adjacency matrix $\Up{A}$ of $\FSudUp{k}(n,T)$ from
the adjacency matrix $A=(a_{ij})$ of $\FSud(n,T)$ by replacing each entry $a_{ij}$ of $A$ by a $(k^2\times k^2)$-matrix
that solely depends on whether (in the original puzzle) cell number $j$ is in the same block as cell number $i$, 
or otherwise in the same row or column as $j$ 
or lies somewhere else. 

For this purpose we define the symbolic template adjacency matrix $\TM(A)=(t_{ij})$ as follows:
\begin{itemize}
\item $t_{ii} = \mathrm{D}$,
\item $t_{ij} = \mathrm{B}$ if $i\not=j$ and cells $i,j$ are in the same block of the original puzzle,
\item $t_{ij} = \mathrm{H}$ if cells $i,j$ are not in the same block but in the same row of the original puzzle,
\item $t_{ij} = \mathrm{V}$ if cells $i,j$ are not in the same block but in the same column of the original puzzle,
\item $t_{ij} = \mathrm{N}$ else.
\end{itemize}

The next step is to define the matrices that will be substituted into the template matrix.
But first we need some building blocks. Let $I_r$ denote the identity matrix, $J_r$ the all ones matrix and $N_r$ the zero matrix of size $r\times r$.
Further, we will make use of the Kronecker product $\otimes$ of real matrices, cf.\ \cite{HJ}:

\begin{definition}\label{def:kron}
Let $A\in\RR^{p\times q}$ and $B\in\RR^{r\times s}$. Then the Kronecker product of $A$ and $B$ is defined as the block matrix
\[
	A\otimes B = \left(
\begin{array}{ccc}
 a_{11}B & \dots a_{1q} B \\
 \vdots  & \vdots \\
a_{p1}B & \dots a_{pq} B 
 \end{array}
\right) \in\RR^{pr\times sq}.
\]
\end{definition}

Fixing the given blow up factor $k$, we define the following $(k^2\times k^2)$-matrices:
\begin{equation}\label{matHVBN}
H = I_k \otimes J_k, ~~V = J_k \otimes I_k, ~~B = J_{k^2}, ~~ D = J_{k^2} - I_{k^2}, ~~ N = N_{k^2}.
\end{equation}

The following result should now be self-evident:
\begin{proposition}
Given an $n\times n$ free-form Sudoku puzzle with block partition $T$, 
the adjacency matrix of $\FSudUp{k}(n,T)$ (with respect to the vertex numbering mentioned earlier) can be obtained
from the symbolic template matrix $\TM(A)=(t_{ij})$
of the adjacency matrix $A=(a_{ij})$ of the graph $\FSud(n,T)$
by replacing each entry $t_{ij}$ of $\TM(A)$
by the contents of the matching matrix from \eqref{matHVBN} that has the same
name as the symbolic value of $t_{ij}$ suggests.
\end{proposition}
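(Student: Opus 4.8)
The plan is to prove the proposition by a direct case analysis over pairs of index blocks. Because of the chosen vertex numbering, the adjacency matrix $\Up{A}$ of $\FSudUp{k}(n,T)$ decomposes naturally into an $n^2\times n^2$ grid of $(k^2\times k^2)$-blocks, where the $(i,j)$-block records exactly the adjacencies between the replacement cells $S_i$ and $S_j$. It therefore suffices to show that this $(i,j)$-block equals the matrix prescribed by $t_{ij}$ for each of the five possible symbolic values $\mathrm{D},\mathrm{B},\mathrm{H},\mathrm{V},\mathrm{N}$.

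First I would fix coordinates: write $R_i,C_i$ for the row and column of the original cell $i$, so that the cell at relative position $(r,c)$ inside $S_i$ occupies global row $k(R_i-1)+r$ and global column $k(C_i-1)+c$, and carries local index $(r-1)k+c$. With these coordinates two elementary facts settle almost everything. Two blown-up cells share a global row if and only if their originals share a row and their relative row indices coincide (and symmetrically for columns); and two blown-up cells share a block if and only if $i$ and $j$ share a block of $T$, which is immediate from the construction of $T'$. The row/column statement follows because the offset $r-r'$ is strictly smaller in modulus than $k$ and hence cannot compensate for a nonzero multiple of $k$ arising from $k(R_i-R_j)$.

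Then I would dispatch the five cases in turn. For $t_{ii}=\mathrm{D}$, all cells of $S_i$ lie in one block, so by rule (B) they are pairwise adjacent, giving the diagonal block $J_{k^2}-I_{k^2}=D$. For $t_{ij}=\mathrm{B}$, all cells of $S_i$ and $S_j$ share a block, giving the complete pattern $J_{k^2}=B$. For $t_{ij}=\mathrm{H}$ (same row, different block) adjacency holds precisely when the relative rows agree while the columns never coincide, which under the local numbering is exactly the sparsity pattern of $I_k\otimes J_k=H$; the symmetric argument yields $V$ for $\mathrm{V}$. In the $\mathrm{N}$ case neither a common row, nor a common column, nor a common block can occur between $S_i$ and $S_j$, so that block is $N$.

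The only point requiring genuine care, as opposed to pure bookkeeping, is the verification that the two Kronecker products encode the correct incidence patterns under this specific local cell numbering. I expect this matching step to be the main obstacle, since one must confirm that the slow (outer) index of the product corresponds to the relative row and the fast (inner) index to the relative column, so that the entry of $I_k\otimes J_k$ at $\bigl((r-1)k+c,(r'-1)k+c'\bigr)$ is $(I_k)_{rr'}(J_k)_{cc'}=[r=r']$, matching the same-relative-row condition, while $J_k\otimes I_k$ yields $[c=c']$, matching the same-relative-column condition.
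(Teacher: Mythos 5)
Your proof is correct and follows exactly the route the paper intends: the paper declares this proposition ``self-evident'' on the basis of its preceding observation about how rows, columns and blocks behave under blow-up, and your block-by-block case analysis (including the check that $|r-r'|<k$ cannot offset a nonzero multiple of $k$, and the verification that $I_k\otimes J_k$ and $J_k\otimes I_k$ encode the same-relative-row and same-relative-column conditions under the row-major local numbering) is precisely the bookkeeping the paper omits. Nothing is missing.
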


The replacement process can be expressed by means of the Kronecker product.
Let us take the template matrix $\TM(A)$ and use it to partition
the non-zero entries $a_{ij}$ of $A$ according to their respective symbols $t_{ij}$.
Setting $L_D=I_{n^2}$, the blown up adjacency matrix $\Up{A}$ can now be expressed as follows:

\begin{proposition}
\begin{equation}\label{eq:decomp2}
\Up{A} = L_B \otimes B + L_H \otimes H + L_V \otimes V + L_D \otimes D.
\end{equation}
\end{proposition}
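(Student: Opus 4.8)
The plan is to verify the identity entry by entry, using the block structure imposed by the chosen cell numbering of the blown-up puzzle. Recall that the adjacency matrix $\Up{A}$ is organized into an $n^2\times n^2$ grid of $(k^2\times k^2)$-blocks, where the $(i,j)$-block records all adjacencies between the replacement cells in $S_i$ and those in $S_j$. By the construction described above, this $(i,j)$-block is precisely the matrix from \eqref{matHVBN} whose name matches the symbol $t_{ij}$ of the template matrix $\TM(A)$. The task is therefore to show that summing the four Kronecker products on the right-hand side of \eqref{eq:decomp2} reproduces exactly this block in each of the five cases $t_{ij}\in\{\mathrm{D},\mathrm{B},\mathrm{H},\mathrm{V},\mathrm{N}\}$.

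First I would unwind the definition of the Kronecker product to see that for each summand $L_X\otimes X$, the $(i,j)$-block equals $(L_X)_{ij}\,X$, which is either $X$ (when $(L_X)_{ij}=1$) or the zero matrix $N_{k^2}$ (when $(L_X)_{ij}=0$). Thus the $(i,j)$-block of the entire right-hand side is the sum $(L_B)_{ij}B+(L_H)_{ij}H+(L_V)_{ij}V+(L_D)_{ij}D$. The key observation is that the four index matrices $L_B,L_H,L_V,L_D=I_{n^2}$ have mutually disjoint supports: by the Observation in Section~2 the cases (B), (H), (V) are mutually exclusive for $i\neq j$, while $L_D$ is supported exactly on the diagonal. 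Hence for any fixed pair $(i,j)$ at most one of the four coefficients is nonzero, and which one it is agrees precisely with the symbol $t_{ij}$.

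The bulk of the argument then reduces to a short case analysis matching each symbol to the correct substituted matrix. If $i=j$, only $(L_D)_{ii}=1$ contributes, giving the diagonal block $D=J_{k^2}-I_{k^2}$; this is correct because within a single $S_i$ every pair of distinct replacement cells shares both the same block (they inherit the block of cell $i$) and hence must be adjacent, while no cell is adjacent to itself. If $t_{ij}=\mathrm{B}$, cells $i$ and $j$ lie in a common block, so by the second part of the Observation all of $S_i$ is fully joined to all of $S_j$, matching $B=J_{k^2}$. If $t_{ij}=\mathrm{H}$, cells $i,j$ share a row but not a block, so by the first part of the Observation a replacement cell of $S_i$ is adjacent to a replacement cell of $S_j$ exactly when they share the same relative row index; indexing so that the relative row varies in the outer factor, this pattern is exactly $H=I_k\otimes J_k$. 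The case $t_{ij}=\mathrm{V}$ is the column-analogue yielding $V=J_k\otimes I_k$, and $t_{ij}=\mathrm{N}$ gives no edges, matching $N$.

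The main obstacle is bookkeeping rather than mathematical depth: one must confirm that the intra-$S_i$ row/column conventions (cells numbered left to right, advancing row by row) line up the relative-row pattern with the \emph{outer} Kronecker factor and the relative-column pattern with the \emph{inner} factor, so that $H=I_k\otimes J_k$ and $V=J_k\otimes I_k$ come out with the correct tensor order rather than swapped. Once the numbering convention is pinned down and the disjoint-support observation is invoked, the five cases close immediately and the identity \eqref{eq:decomp2} follows.
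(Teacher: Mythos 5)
Your proof is correct and follows essentially the same route as the paper, which states this proposition without proof as an immediate consequence of the template-matrix substitution described just before it; you have simply made the implicit block-by-block verification explicit. Your key observations --- that the $(i,j)$-block of each summand $L_X\otimes X$ is $(L_X)_{ij}X$, that the supports of $L_B$, $L_H$, $L_V$, $L_D$ are pairwise disjoint, and that the intra-$S_i$ numbering puts the relative row in the outer Kronecker factor so that $H=I_k\otimes J_k$ and $V=J_k\otimes I_k$ are not swapped --- are exactly the points the paper relies on when it calls the substitution rule self-evident.
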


We demonstrate the process of deriving the adjacency matrix of the blown up graph by an illustrative example. Consider the free-form Shidoku depicted in
Figure \ref{figshidokuconstr} (b) and, for the sake of simplicity, number the cells successively by starting in the top left corner
and proceeding from left to right in each row, advancing row by row. 
Considering the associated graph $\FSud(2,T)$ with tiling $T=\{\{1,2,3,4\},\ab \{5,9,13,14\},\ab \{6,8,12,16\},\ab \{7,10,11,15\}\}$,
this numbering yields the following adjacency matrix $A$ and symbolic template matrix $\TM$:

\begin{center}
\resizebox{0.95\hsize}{!}{
$ A=
\left(
{\begin{array}{rrrrrrrrrrrrrrrr}
0 & 1 & 1 & 1 & 1 & 0 & 0 & 0 & 1 & 0 & 0 & 0 & 1 & 0 & 0 & 0 \\
1 & 0 & 1 & 1 & 0 & 1 & 0 & 0 & 0 & 1 & 0 & 0 & 0 & 1 & 0 & 0 \\
1 & 1 & 0 & 1 & 0 & 0 & 1 & 0 & 0 & 0 & 1 & 0 & 0 & 0 & 1 & 0 \\
1 & 1 & 1 & 0 & 0 & 0 & 0 & 1 & 0 & 0 & 0 & 1 & 0 & 0 & 0 & 1 \\
1 & 0 & 0 & 0 & 0 & 1 & 1 & 1 & 1 & 0 & 0 & 0 & 1 & 1 & 0 & 0 \\
0 & 1 & 0 & 0 & 1 & 0 & 1 & 1 & 0 & 1 & 0 & 1 & 0 & 1 & 0 & 1 \\
0 & 0 & 1 & 0 & 1 & 1 & 0 & 1 & 0 & 1 & 1 & 0 & 0 & 0 & 1 & 0 \\
0 & 0 & 0 & 1 & 1 & 1 & 1 & 0 & 0 & 0 & 0 & 1 & 0 & 0 & 0 & 1 \\
1 & 0 & 0 & 0 & 1 & 0 & 0 & 0 & 0 & 1 & 1 & 1 & 1 & 1 & 0 & 0 \\
0 & 1 & 0 & 0 & 0 & 1 & 1 & 0 & 1 & 0 & 1 & 1 & 0 & 1 & 1 & 0 \\
0 & 0 & 1 & 0 & 0 & 0 & 1 & 0 & 1 & 1 & 0 & 1 & 0 & 0 & 1 & 0 \\
0 & 0 & 0 & 1 & 0 & 1 & 0 & 1 & 1 & 1 & 1 & 0 & 0 & 0 & 0 & 1 \\
1 & 0 & 0 & 0 & 1 & 0 & 0 & 0 & 1 & 0 & 0 & 0 & 0 & 1 & 1 & 1 \\
0 & 1 & 0 & 0 & 1 & 1 & 0 & 0 & 1 & 1 & 0 & 0 & 1 & 0 & 1 & 1 \\
0 & 0 & 1 & 0 & 0 & 0 & 1 & 0 & 0 & 1 & 1 & 0 & 1 & 1 & 0 & 1 \\
0 & 0 & 0 & 1 & 0 & 1 & 0 & 1 & 0 & 0 & 0 & 1 & 1 & 1 & 1 & 0
\end{array}}
 \right),~
\TM(A) = \left(
{ 
\newcommand{\bB}{\text{\bf B}}
\newcommand{\bH}{\text{\bf H}}
\newcommand{\bV}{\text{\bf V}}
\newcommand{\bD}{\text{\itshape D}}
\newcommand{\bN}{\text{ N}}
\begin{array}{cccccccccccccccc}
\bD & \bB & \bB & \bB & \bV & \bN & \bN & \bN & \bV & \bN & \bN & \bN & \bV & \bN & \bN & \bN \\
\bB & \bD & \bB & \bB & \bN & \bV & \bN & \bN & \bN & \bV & \bN & \bN & \bN & \bV & \bN & \bN \\
\bB & \bB & \bD & \bB & \bN & \bN & \bV & \bN & \bN & \bN & \bV & \bN & \bN & \bN & \bV & \bN \\
\bB & \bB & \bB & \bD & \bN & \bN & \bN & \bV & \bN & \bN & \bN & \bV & \bN & \bN & \bN & \bV \\
\bV & \bN & \bN & \bN & \bD & \bH & \bH & \bH & \bB & \bN & \bN & \bN & \bB & \bB & \bN & \bN \\
\bN & \bV & \bN & \bN & \bH & \bD & \bH & \bB & \bN & \bV & \bN & \bB & \bN & \bV & \bN & \bB \\
\bN & \bN & \bV & \bN & \bH & \bH & \bD & \bH & \bN & \bB & \bB & \bN & \bN & \bN & \bB & \bN \\
\bN & \bN & \bN & \bV & \bH & \bB & \bH & \bD & \bN & \bN & \bN & \bB & \bN & \bN & \bN & \bB \\
\bV & \bN & \bN & \bN & \bB & \bN & \bN & \bN & \bD & \bH & \bH & \bH & \bB & \bB & \bN & \bN \\
\bN & \bV & \bN & \bN & \bN & \bV & \bB & \bN & \bH & \bD & \bB & \bH & \bN & \bV & \bB & \bN \\
\bN & \bN & \bV & \bN & \bN & \bN & \bB & \bN & \bH & \bB & \bD & \bH & \bN & \bN & \bB & \bN \\
\bN & \bN & \bN & \bV & \bN & \bB & \bN & \bB & \bH & \bH & \bH & \bD & \bN & \bN & \bN & \bB \\
\bV & \bN & \bN & \bN & \bB & \bN & \bN & \bN & \bB & \bN & \bN & \bN & \bD & \bB & \bH & \bH \\
\bN & \bV & \bN & \bN & \bB & \bV & \bN & \bN & \bB & \bV & \bN & \bN & \bB & \bD & \bH & \bH \\
\bN & \bN & \bV & \bN & \bN & \bN & \bB & \bN & \bN & \bB & \bB & \bN & \bH & \bH & \bD & \bH \\
\bN & \bN & \bN & \bV & \bN & \bB & \bN & \bB & \bN & \bN & \bN & \bB & \bH & \bH & \bH & \bD
\end{array}}
 \right) $
}
\end{center}

Next construct the $3$-fold blowup as indicated in Figure \ref{figshidoku3blowup}. We number its $9\cdot 16=144$
cells as described earlier. The adjacency matrix of $\FSudUp{3}(2,T)$
is now readily obtained by replacing each symbol in $\TM(A)$ by one of the following matrices with
the matching name:

\begin{center}
\resizebox{0.95\hsize}{!}{
$
H = \left( 
{\begin{array}{rrrrrrrrr}
1 & 1 & 1 & 0 & 0 & 0 & 0 & 0 & 0 \\
1 & 1 & 1 & 0 & 0 & 0 & 0 & 0 & 0 \\
1 & 1 & 1 & 0 & 0 & 0 & 0 & 0 & 0 \\
0 & 0 & 0 & 1 & 1 & 1 & 0 & 0 & 0 \\
0 & 0 & 0 & 1 & 1 & 1 & 0 & 0 & 0 \\
0 & 0 & 0 & 1 & 1 & 1 & 0 & 0 & 0 \\
0 & 0 & 0 & 0 & 0 & 0 & 1 & 1 & 1 \\
0 & 0 & 0 & 0 & 0 & 0 & 1 & 1 & 1 \\
0 & 0 & 0 & 0 & 0 & 0 & 1 & 1 & 1
\end{array}}
 \right),
V= \left(
{\begin{array}{rrrrrrrrr}
1 & 0 & 0 & 1 & 0 & 0 & 1 & 0 & 0 \\
0 & 1 & 0 & 0 & 1 & 0 & 0 & 1 & 0 \\
0 & 0 & 1 & 0 & 0 & 1 & 0 & 0 & 1 \\
1 & 0 & 0 & 1 & 0 & 0 & 1 & 0 & 0 \\
0 & 1 & 0 & 0 & 1 & 0 & 0 & 1 & 0 \\
0 & 0 & 1 & 0 & 0 & 1 & 0 & 0 & 1 \\
1 & 0 & 0 & 1 & 0 & 0 & 1 & 0 & 0 \\
0 & 1 & 0 & 0 & 1 & 0 & 0 & 1 & 0 \\
0 & 0 & 1 & 0 & 0 & 1 & 0 & 0 & 1
\end{array}}
 \right),$
} \\
$ B = J_9, D= J_9 - I_9, N = N_9.$
\end{center}

Our goal is to study the eigenvalues of blown up free-form Sudokus and express them in terms of the
eigenvalues of the original puzzle.

For the rest of this section, we consider an arbitrary but fixed free-form $n\times n$ Sudoku puzzle with tiling $T$.
Let $A$ be the adjacency matrix of its associated graph $\FSud(n,T)$ and let $\Up{A}$ be the
adjacency matrix of $\FSudUp{k}(n,T)$. We assume that $L_V$, $L_H$, $L_B$ etc.\ denote the matrices
appearing in equations \eqref{eq:decomp1} and \eqref{eq:decomp2}. Further, let $\Eig(M)$ represent the set of 
all eigenvectors of a given matrix (or even a graph) $M$ and let $\Ker(M)$ be the set of all eigenvectors of $M$ associated with eigenvalue $0$.
Note that neither set contains the null vector.

In the following, we will make use of the following properties of the Kronecker product:

\begin{theorem}[see \cite{HJ}]\leavevmode\vspace{-0.5\baselineskip}
	\begin{enumerate}
	\item $(\alpha A) \otimes B = A \otimes (\alpha B) = \alpha(A \otimes B) \textall \alpha \in \RR, A \in \RR^{p\times q},B \in \RR^{r\times s}$
	\item $(A \otimes B) \otimes C = A \otimes (B \otimes C)  \textall A \in \RR^{m\times n}, B \in \RR^{p\times q}, C \in \RR^{r\times s}$
	\item $(A + B) \otimes C = A \otimes C + B \otimes C  \textall A,B \in \RR^{p\times q}, C \in \RR^{r\times s}$
	\item $A \otimes (B+C) = A \otimes B + A \otimes C  \textall A \in \RR^{p\times q}, B,C \in \RR^{r\times s}$
	\item $(A  \otimes B) (C \otimes D) = AC \otimes BD  \textall A \in \RR^{p\times q}, B \in \RR^{r\times s}, C \in \RR^{q\times k}, D \in \RR^{s\times l}$
	\end{enumerate}
\end{theorem}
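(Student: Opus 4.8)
The plan is to verify all five identities directly from the block definition of the Kronecker product given in Definition \ref{def:kron}. The cleanest device is the scalar index formula: if $A=(a_{ij})\in\RR^{p\times q}$ and $B=(b_{k\ell})\in\RR^{r\times s}$, then the entry of $A\otimes B$ in block position $(i,j)$ and local position $(k,\ell)$ --- i.e.\ in global row $(i-1)r+k$ and global column $(j-1)s+\ell$ --- equals $a_{ij}b_{k\ell}$. Once this formula is in hand, properties 1, 3 and 4 are immediate: for property 1 the common entry is $(\alpha a_{ij})b_{k\ell}=a_{ij}(\alpha b_{k\ell})=\alpha(a_{ij}b_{k\ell})$, and for the two distributive laws one simply factors $a_{ij}b_{k\ell}+a'_{ij}b_{k\ell}=(a_{ij}+a'_{ij})b_{k\ell}$ (and symmetrically on the right). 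Since matching matrices have matching dimensions, no size check beyond the obvious is required.

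For associativity (property 2) I would again compare entries. The triple products $(A\otimes B)\otimes C$ and $A\otimes(B\otimes C)$ both carry a threefold block structure and have the same overall dimensions, and under the natural nested indexing each holds the common value $a_{ij}b_{k\ell}c_{uv}$ in the corresponding position. The only work here is bookkeeping: one must check that the two ways of flattening the nested block indices into a single row (resp.\ column) index agree, which follows because both reduce to the same mixed-radix expansion.

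The substantive identity is the mixed product property (property 5), and I would prove it by genuine block multiplication rather than by the index formula. Write $A\otimes B$ in $p\times q$ blocks with block $(i,j)$ equal to $a_{ij}B$, and $C\otimes D$ in $q\times k$ blocks with block $(j,m)$ equal to $c_{jm}D$; the dimensions are compatible precisely because the inner block count of the first factor matches the outer block count of the second. Block multiplication then gives that block $(i,m)$ of the product is $\sum_{j=1}^{q}(a_{ij}B)(c_{jm}D)=\Big(\sum_{j=1}^{q}a_{ij}c_{jm}\Big)BD=(AC)_{im}\,BD$, which is exactly block $(i,m)$ of $AC\otimes BD$. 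The crucial point --- and the only place where more than formal manipulation is needed --- is that the scalars $a_{ij}$ and $c_{jm}$ pull out of the block products while the surviving factor $BD$ is independent of the summation index $j$, so the block sum collapses onto the ordinary matrix product $AC$. I expect this collapse of the block sum to be the main conceptual step; everything else is routine index arithmetic.
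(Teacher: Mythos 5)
Your proof is correct, but note that the paper does not prove this theorem at all: it is quoted verbatim from the literature (Horn and Johnson, the reference \emph{[HJ]}), so there is no in-paper argument to compare against. Your self-contained verification is the standard one and it checks out. The entry formula $(A\otimes B)_{(i-1)r+k,\,(j-1)s+\ell}=a_{ij}b_{k\ell}$ follows directly from the block definition in Definition~\ref{def:kron} and does dispose of properties 1, 3 and 4 instantly; for property 2 the mixed-radix bookkeeping is exactly right, since both flattenings send the nested index $(i,k,u)$ to $(i-1)rt+(k-1)t+u$. For property 5 your block-multiplication argument is valid; the one point worth stating explicitly (you gesture at it) is that the partition is conformable not merely because the block \emph{counts} agree ($q$ column-blocks against $q$ row-blocks) but because the block \emph{sizes} agree as well: $A\otimes B$ has column-blocks of width $s$ and $C\otimes D$ has row-blocks of height $s$, and within each term $(a_{ij}B)(c_{jm}D)$ the product $BD$ is defined since $B\in\RR^{r\times s}$ and $D\in\RR^{s\times l}$. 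With that observation the collapse $\sum_{j}a_{ij}c_{jm}\,BD=(AC)_{im}BD$ is exactly the mixed product property, and your proof is complete. In short: the paper cites, you prove, and your proof is sound.
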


Note that the Kronecker
product formally also includes the case where one or both factors are vectors.

\begin{lemma} \label{l1}
Suppose that $x$ and $y$ are eigenvectors of $L_V$ and $J_k$ corresponding to the eigenvalues $\lambda$ and $\alpha$, respectively.
Then, for any $z \in\Ker(J_k)$, $x \otimes y \otimes z$ is an eigenvector of $\Up{A}$ corresponding to eigenvalue $\lambda\alpha -1$.
\end{lemma}

\begin{proof} 
We use equations \eqref{matHVBN}, \eqref{eq:decomp2}  and the facts $L_Vx=\lambda x$, $J_ky=\alpha y$, $J_kz=0$.
\begin{align*}
&\Up{A} (x\otimes y \otimes z)  \\
&~~= (L_B \otimes B + L_H \otimes H + L_V \otimes V + L_D \otimes D)(x\otimes y \otimes z)  \\
&~~= (L_B \otimes J_{k^2} + L_H \otimes I_k \otimes J_k + L_V \otimes J_k \otimes I_k \\
&\hspace*{0.2\linewidth}+ L_D \otimes J_{k^2} - L_D \otimes I_{k^2})(x\otimes y \otimes z)  \\
&~~= ((L_Bx) \otimes (J_{k}y) \otimes (J_{k}z)) + ((L_Hx) \otimes (I_ky) \otimes (J_kz))  \\
&\hspace*{0.2\linewidth} + ((L_Vx) \otimes (J_ky) \otimes (I_kz)) +((L_Dx) \otimes (J_{k}y) \otimes (J_{k}z)) \\
&\hspace*{0.2\linewidth} - ((L_Dx) \otimes (I_{k}y) \otimes (I_{k}z))  \\
&~~= (\lambda x)\otimes (\alpha y) \otimes z - x \otimes y \otimes z \\
&~~= (\lambda \alpha - 1) (x \otimes y \otimes z).
\end{align*}
\end{proof}

\begin{lemma} \label{l2}
Suppose that $x$ and $y$ are eigenvectors of $L_H$ and $J_n$ corresponding to the eigenvalues $\lambda$ and $\alpha$,  respectively. 
Then, for any $z \in\Ker(J_n)$, $x \otimes y \otimes z$ is an eigenvector of $\Up{A}$ corresponding to eigenvalue $\lambda\alpha -1$.
\end{lemma}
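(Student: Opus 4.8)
The plan is to prove Lemma \ref{l2} by the same direct computation used for Lemma \ref{l1}, exploiting the symmetry between the two edge-layer blocks $H = I_k \otimes J_k$ and $V = J_k \otimes I_k$. First I would substitute the candidate vector $x \otimes y \otimes z$ into the decomposition \eqref{eq:decomp2} and expand the four summands using associativity together with the mixed-product rule (property 5 of the Kronecker theorem), writing $J_{k^2} = J_k \otimes J_k$ and $I_{k^2} = I_k \otimes I_k$ as in \eqref{matHVBN}. Each term then factors as $(L_\bullet x) \otimes (\,\cdot\, y) \otimes (\,\cdot\, z)$, where the inner factors are obtained by applying $I_k$ or $J_k$ to $y$ and to $z$ according to whether the block is $B = J_k \otimes J_k$, $H = I_k \otimes J_k$, $V = J_k \otimes I_k$, or $D = J_{k^2} - I_{k^2}$.

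The crucial step is tracking which inner factors are annihilated. The hypothesis places an eigenvector of $J_k$ (eigenvalue $\alpha$) in the tensor slot on which the $J_k$ factor of $H$ acts, and a kernel vector from $\Ker(J_k)$ in the complementary slot; consequently $J_k$ annihilates that kernel vector. This kills the $B$ term (its $J_k$ meets the kernel vector), the $V$ term (likewise), and the $J_{k^2}$-part of the $D$ term, exactly as the $V$ term and the corresponding parts vanished in Lemma \ref{l1}. What survives is the $H$ term, which evaluates to $(L_H x) \otimes (I_k y) \otimes (J_k z) = \lambda\alpha\,(x \otimes y \otimes z)$ using $L_H x = \lambda x$, together with the $-I_{k^2}$-part of the $D$ term, which (since $L_D = I_{n^2}$) contributes $-(x \otimes y \otimes z)$. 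Summing gives the eigenvalue $\lambda\alpha - 1$, matching the claim.

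The only genuine difficulty is bookkeeping rather than mathematical depth: one must ensure it is the $L_H$ summand, and not the $L_V$ summand, that survives. Because $H = I_k \otimes J_k$ and $V = J_k \otimes I_k$ place the $J_k$ factor in opposite tensor positions, the roles of the eigenvector and the kernel vector relative to the proof of Lemma \ref{l1} are interchanged; getting this slot assignment right is precisely what forces the $V$, $B$, and $J_{k^2}$ contributions to vanish while the $H$ contribution persists. Once the factorisation of each Kronecker term is written out carefully, the remainder of the argument is an immediate transcription of the display in the proof of Lemma \ref{l1}.
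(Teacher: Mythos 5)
Your approach is exactly the paper's: the published proof of Lemma \ref{l2} consists of the single sentence ``Similar to the proof of Lemma \ref{l1}'', i.e.\ the same expansion of $\Up{A}(x\otimes y\otimes z)$ via the mixed-product rule, and you have reproduced that computation. You have also put your finger on precisely the point where ``similar'' hides a subtlety, namely which tensor slot the $J_k$ factor of $H=I_k\otimes J_k$ acts on.

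However, your write-up does not carry that observation through consistently, and in doing so it exposes a defect in the statement itself. By the mixed-product rule the $H$ term is $(L_Hx)\otimes(I_ky)\otimes(J_kz)$, so $J_k$ hits the \emph{third} slot. Under the stated hypothesis $z\in\Ker(J_k)$ (the $J_n$ in the statement is surely a typo for $J_k$), this gives $J_kz=0$, so the $H$ term is annihilated exactly as it was in Lemma \ref{l1}; what survives instead is the unreduced $V$ term $\alpha\,(L_Vx)\otimes y\otimes z$ together with $-(x\otimes y\otimes z)$, and $x\otimes y\otimes z$ is then in general \emph{not} an eigenvector. Your displayed identity $(L_Hx)\otimes(I_ky)\otimes(J_kz)=\lambda\alpha\,(x\otimes y\otimes z)$ silently assumes $J_kz=\alpha z$, i.e.\ that $z$ is the $J_k$-eigenvector, which contradicts $z\in\Ker(J_k)$. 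The fix is the one your own last paragraph describes but does not execute: interchange the roles, putting the kernel vector in the middle slot and the $J_k$-eigenvector in the last slot, so that the eigenvector of $\Up{A}$ is $x\otimes z\otimes y$ (equivalently, restate the lemma with $y\in\Ker(J_k)$ and $z$ an eigenvector of $J_k$). This is exactly the form used later in the sets $X_H$ and $\Xb_H$ of Theorem \ref{th:main}, where $\jv_k$ occupies the third position for the $L_H$ family. So the underlying argument is right; you just need to state the surviving term with the vectors in the slots your own analysis demands.
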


\begin{proof}
Similar to the proof of Lemma \ref{l1}.
\end{proof}

The previous two lemmas will play a role in the construction of a basis of eigenvectors of a blown up Sudoku graph.  
To this end, we need to know the intersection of the spans of the two vector sets mentioned there.
But first note the following obvious facts:
\begin{proposition}\label{pro:jk}\leavevmode\vspace{-0.5\baselineskip}
\begin{enumerate}
\item The spectrum of $J_k$ is $\{k^{(1)}, 0^{(k-1)}\}$.
\item The set
\begin{align*}
&\Kb_J := \{ (1, -1, 0, 0, \ldots, 0, 0)^T, (1 , 0 , -1 , 0 , \ldots , 0 , 0)^T, \\
&\hspace*{2cm}\ldots, (1 , 0 , 0 , 0 , \ldots , 0 , -1)^T \}
\end{align*}
is a maximal linearly independent subset of $\Ker(J_k)$.
\item The set $\{\jv_k\} \cup \Kb_J$ is a maximal linearly independent subset of $\Eig(J_k)$.
\item $\jv_k \perp \Ker(J_n).$
\end{enumerate}
\end{proposition}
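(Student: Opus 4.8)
The plan is to reduce everything to the single observation that $J_k=\jv_k\jv_k^T$ is a real symmetric matrix of rank one, so its spectrum and eigenspaces are completely transparent.

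For statement (1) I would note that $J_k\jv_k=k\,\jv_k$, so $k$ is an eigenvalue with eigenvector $\jv_k$; since $\operatorname{rank}(J_k)=1$ the nullspace has dimension $k-1$, giving eigenvalue $0$ with multiplicity $k-1$. The multiplicities $1+(k-1)=k$ account for the whole space, and the eigenvalue sum matches $\operatorname{tr}(J_k)=k$, confirming the claimed spectrum. For statement (2) I would argue in three short steps. Each $v\in\Kb_J$ has coordinate sum $1+(-1)=0$, and since every row of $J_k$ equals $\jv_k^T$ we get $J_kv=(\jv_k^Tv)\,\jv_k=0$, so $\Kb_J\subseteq\Ker(J_k)$. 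The $k-1$ vectors are linearly independent, because the $j$-th of them is the only one with a nonzero entry in coordinate $j+1$. Finally, statement (1) gives $\dim\Ker(J_k)=k-1$, so a linearly independent family of $k-1$ kernel vectors is forced to be a basis, hence maximal.

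Statement (3) follows by appending $\jv_k$: it is an eigenvector for eigenvalue $k$, and it is orthogonal to every member of $\Kb_J$ (each inner product is $1-1=0$), so $\{\jv_k\}\cup\Kb_J$ is a linearly independent set of $k$ eigenvectors. Because $J_k$ is symmetric, hence diagonalizable, its eigenvectors span $\RR^k$, so $\Span(\Eig(J_k))=\RR^k$ and any linearly independent subset of $\Eig(J_k)$ has at most $k$ elements; having exactly $k$ makes the set maximal. Statement (4) is the orthogonality of the all-ones vector to the kernel: if $z\in\Ker(J)$ then $Jz=0$, and reading off any coordinate of $Jz$ (whose defining row is $\jv^T$) gives $\jv^Tz=0$. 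Equivalently, $\Ker(J)$ is precisely the hyperplane of vectors with zero coordinate sum, the orthogonal complement of $\jv$.

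None of these computations is difficult; the only points I would treat with care are conceptual. First, the phrases ``maximal linearly independent subset'' of $\Ker(J_k)$ and of $\Eig(J_k)$ refer to sets that are unions of eigenspaces rather than subspaces, so I would fix the reading that a maximal linearly independent $B\subseteq S$ is one with $\Span(B)=\Span(S)$; under this reading maximality in (2) and (3) is settled entirely by the dimension counts $\dim\Ker(J_k)=k-1$ and $\Span(\Eig(J_k))=\RR^k$. Second, the inner product in statement (4) only makes sense when the two all-ones objects share a common size, so I read it as $\jv_k\perp\Ker(J_k)$ --- the very orthogonality invoked, through $J_k$, in the construction behind Lemma \ref{l1}.
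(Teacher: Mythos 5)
Your proof is correct. The paper itself offers no proof of this proposition --- it is introduced with ``note the following obvious facts'' --- so there is no argument to compare against; your rank-one reduction ($J_k=\jv_k\jv_k^T$, hence eigenvalue $k$ on $\jv_k$ and a $(k-1)$-dimensional kernel equal to the zero-sum hyperplane) is exactly the standard justification the authors are implicitly relying on, and your dimension counts settle maximality in items (2) and (3). You are also right to flag the index mismatch in item (4): as written, $\jv_k \perp \Ker(J_n)$ only makes sense for $n=k$, and the intended statement (the one actually used in the later lemmas) is $\jv_k \perp \Ker(J_k)$.
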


For the next lemma we define the following sets:
\begin{align*}
X_H &= \{ x\otimes y\otimes z :~ x\in\Eig(L_H), y\in\Ker(J_k), z\in\Eig(J_k)\}, \\
X_V &= \{ x\otimes y\otimes z :~ x\in\Eig(L_V), y\in\Eig(J_k), z\in\Ker(J_k)\}, \\
\tilde X_H & = \{x\otimes y\otimes z :~ x\in\Eig(L_H),~ y,z\in\Ker(J_k)\}, \\
\tilde X_V & = \{x\otimes y\otimes z :~ x\in\Eig(L_V),~ y,z\in\Ker(J_k)\}.
\end{align*}

\begin{lemma} \label{l12int}
\begin{align*}
\Span(X_H)\cap\Span(X_V)  = \Span(\tilde X_H) = \Span(\tilde X_V).
\end{align*}
\end{lemma}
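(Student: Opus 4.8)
The plan is to reduce the statement to a fact about tensor products of coordinate subspaces. First I record the elementary span identity that, for any sets of vectors $S_1,S_2,S_3$,
\[
\Span\{x\otimes y\otimes z : x\in S_1,\ y\in S_2,\ z\in S_3\} = \Span(S_1)\otimes\Span(S_2)\otimes\Span(S_3),
\]
which follows directly from the bilinearity of the Kronecker product (properties 1, 3, 4 of the Kronecker theorem cited above). Combined with the observations that $L_H$ and $L_V$ are real symmetric, hence possess a full eigenbasis so that $\Span(\Eig(L_H))=\Span(\Eig(L_V))=\RR^{n^2}$, and that by Proposition \ref{pro:jk} we have $\Span(\Eig(J_k))=\RR^k$ while $\Span(\Ker(J_k))$ is the $(k-1)$-dimensional subspace $\jv_k^{\perp}$, this yields the clean descriptions
\[
\Span(X_H) = \RR^{n^2}\otimes\Ker(J_k)\otimes\RR^k, \qquad \Span(X_V) = \RR^{n^2}\otimes\RR^k\otimes\Ker(J_k),
\]
\[
\Span(\tilde X_H) = \Span(\tilde X_V) = \RR^{n^2}\otimes\Ker(J_k)\otimes\Ker(J_k),
\]
where I abuse notation by writing $\Ker(J_k)$ for its span. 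In particular the equality $\Span(\tilde X_H)=\Span(\tilde X_V)$ is then immediate.

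It remains to identify the intersection $\Span(X_H)\cap\Span(X_V)$ with $\RR^{n^2}\otimes\Ker(J_k)\otimes\Ker(J_k)$. The key idea is to work in a single basis adapted to the kernel decomposition of the two right-hand factors: take the basis $\{\jv_k\}\cup\Kb_J$ of $\RR^k$ from Proposition \ref{pro:jk}, so that $\Kb_J$ is a basis of $\Ker(J_k)$ and $\jv_k$ spans its complement. Fixing in addition any basis of $\RR^{n^2}$, the products of these basis vectors form a basis of the ambient space $\RR^{n^2}\otimes\RR^k\otimes\RR^k$. With respect to this product basis, both $\Span(X_H)$ and $\Span(X_V)$ are \emph{coordinate subspaces}: $\Span(X_H)$ is spanned by exactly those basis vectors whose middle factor lies in $\Kb_J$ (the right factor unrestricted), and $\Span(X_V)$ by exactly those whose right factor lies in $\Kb_J$ (the middle factor unrestricted).

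The intersection of two coordinate subspaces is the coordinate subspace indexed by the common set of basis vectors, so $\Span(X_H)\cap\Span(X_V)$ is spanned precisely by the basis vectors whose middle \emph{and} right factors both lie in $\Kb_J$; this is exactly $\RR^{n^2}\otimes\Ker(J_k)\otimes\Ker(J_k)=\Span(\tilde X_H)$, completing the proof. I expect the intersection step to be the main obstacle: the tempting identity $(U_1\otimes W_1)\cap(U_2\otimes W_2)=(U_1\cap U_2)\otimes(W_1\cap W_2)$ is easy to misapply in general, and the only safe route is to realize both subspaces as coordinate subspaces within one common adapted basis, after which the intersection can be read off directly.
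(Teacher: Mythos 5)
Your proposal is correct and follows essentially the same route as the paper: both reduce the three spans to tensor products of subspaces, using the diagonalizability of $L_H$ and $L_V$ to replace $\Span(\Eig(L_H))$ and $\Span(\Eig(L_V))$ by $\RR^{n^2}$ and then working only with the $J_k$ factors. The one place you go further is the intersection step itself, which the paper dispatches with the terse remark that $\Ker(J_k)\subseteq\Eig(J_k)$, whereas your realization of both spans as coordinate subspaces in the product basis built from $\{\jv_k\}\cup\Kb_J$ supplies the justification the paper leaves implicit.
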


\begin{proof}
Since $\Ker(J_k)\subseteq \Eig(J_k)$ it is obvious that
\begin{align*}
\Span(\tilde X_H) \subseteq \Span(X_H)\cap\Span(X_V).
\end{align*}
Conversely, note that both $L_H$ and $L_V$ are symmetric and therefore diagonizable, i.e.\ $\Span(\Eig(L_H)) = \Span(\Eig(L_V)) = \RR^{n^2}$.
Using this we conclude
\begin{align*}
\Span(X_H) & \subseteq \RR^{n^2} \otimes \Span(\Ker(J_k)) \otimes \Span(\Eig(J_k)), \\
\Span(X_V) & \subseteq \RR^{n^2} \otimes \Span(\Eig(J_k)) \otimes \Span(\Ker(J_k)), \\
\Span(\tilde X_H) & = \RR^{n^2} \otimes \Span(\Ker(J_k)) \otimes \Span(\Ker(J_k)).
\end{align*}
Once again employing the fact that $\Ker(J_k)\subseteq \Eig(J_k)$, we arrive at
\begin{align*}
\Span(X_H) \cap \Span(X_V) \subseteq \Span(\tilde X_H).
\end{align*}
\end{proof}

\begin{lemma} \label{l3}
Let $x \in \mathbb{R}^{n^2}$ and $y,z \in \Ker(J_k)$. Then $x\otimes y \otimes z$ is an eigenvector of $\Up{A}$ corresponding to eigenvalue $-1 $.
\end{lemma}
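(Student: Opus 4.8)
The plan is to proceed by direct computation, exactly in the spirit of the proof of Lemma \ref{l1}: apply $\Up{A}$ to the candidate eigenvector $x\otimes y\otimes z$ using the decomposition \eqref{eq:decomp2} together with the block definitions \eqref{matHVBN}, and read off the eigenvalue. The first move is to refactor the four $(k^2\times k^2)$ blocks so that they act separately on the second and third tensor slots. Using the identities $J_{k^2}=J_k\otimes J_k$ and $I_{k^2}=I_k\otimes I_k$, equation \eqref{eq:decomp2} becomes
\begin{align*}
\Up{A} = L_B\otimes J_k\otimes J_k + L_H\otimes I_k\otimes J_k + L_V\otimes J_k\otimes I_k + L_D\otimes(J_k\otimes J_k - I_k\otimes I_k),
\end{align*}
where $L_D=I_{n^2}$.

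Next I would apply this operator to $x\otimes y\otimes z$ and use property (5) of the Kronecker product (the mixed-product rule) to distribute each summand factor by factor. The decisive observation is that $y,z\in\Ker(J_k)$, i.e.\ $J_ky=J_kz=0$. Consequently the $L_B$-term yields $(L_Bx)\otimes(J_ky)\otimes(J_kz)=0$, the $L_H$-term yields $(L_Hx)\otimes(I_ky)\otimes(J_kz)=0$, and the $L_V$-term yields $(L_Vx)\otimes(J_ky)\otimes(I_kz)=0$; each vanishes because at least one factor is $J_k$ applied to $y$ or to $z$. The only surviving contribution is the $L_D$-term: its $J_k\otimes J_k$ part again dies, whereas its $-I_k\otimes I_k$ part produces $-(I_{n^2}x)\otimes(I_ky)\otimes(I_kz)=-x\otimes y\otimes z$. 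Summing the four terms gives $\Up{A}(x\otimes y\otimes z)=-(x\otimes y\otimes z)$, which is exactly the claim.

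I do not expect a genuine obstacle here, since the argument is a routine verification once the blocks are rewritten over $k\times k$ factors. The only points deserving a moment of care are confirming the factorizations $J_{k^2}=J_k\otimes J_k$ and $I_{k^2}=I_k\otimes I_k$, and checking that $x\otimes y\otimes z$ is genuinely nonzero so that it truly qualifies as an eigenvector; this holds as long as $x\neq 0$, while $y$ and $z$ are automatically nonzero as members of $\Ker(J_k)$.
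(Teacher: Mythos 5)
Your proposal is correct and follows essentially the same route as the paper: expand $\Up{A}$ via \eqref{eq:decomp2} with $J_{k^2}=J_k\otimes J_k$ and $I_{k^2}=I_k\otimes I_k$, apply the mixed-product rule, and observe that every term containing $J_ky$ or $J_kz$ vanishes, leaving only $-x\otimes y\otimes z$. Your added remark that one needs $x\neq 0$ for the result to be a genuine eigenvector is a fair (if minor) point that the paper's statement glosses over.
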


\begin{proof} 
\begin{align*}
&\Up{A} (x\otimes y \otimes z)  \\
&~~= (L_B \otimes J_{k^2} + L_H \otimes I_k \otimes J_k + L_V \otimes J_k \otimes I_k \\
&\hspace*{0.2\linewidth}+ L_D \otimes J_{k^2} - L_D \otimes I_{k^2})(x\otimes y \otimes z)  \\
&~~= ((L_Bx) \otimes (J_{k}y) \otimes (J_{k}z)) + ((L_Hx) \otimes (I_ky) \otimes (J_kz))  \\
&\hspace*{0.2\linewidth} + ((L_Vx) \otimes (J_ky) \otimes (I_kz)) +((L_Dx) \otimes (J_{k}y) \otimes (J_{k}z)) \\
&\hspace*{0.2\linewidth} - ((L_Dx) \otimes (I_{k}y) \otimes (I_{k}z))  \\
&~~= - ((I_{n^2}x) \otimes (I_{k}y) \otimes (I_{k}z)) \\
&~~= - (x \otimes y \otimes z).
\end{align*}
\end{proof}

In the following, let $\jv_r$ denote the all ones vector of dimension $r$.

\begin{lemma} \label{l4}
For any eigenvector $x$ of the matrix $k^2 L_B + k L_H + k L_V$, the vector $x \otimes \jv_{k^2}$ is an eigenvector of $\Up{A}$ 
corresponding to eigenvalue $\lambda + k^2 -1$.
\end{lemma}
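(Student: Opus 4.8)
The plan is to mimic the direct-computation strategy used in the proofs of Lemmas \ref{l1}, \ref{l2} and \ref{l3}: expand $\Up{A}(x\otimes\jv_{k^2})$ via the decomposition \eqref{eq:decomp2}, push the tensor factors through using the mixed-product property (the last item of the Kronecker product theorem cited above), and then collect terms. The whole argument hinges on the observation that $\jv_{k^2}$ is a common eigenvector of all four substitution matrices $B$, $H$, $V$, $D$ from \eqref{matHVBN}, so I would establish that eigenvalue data first.

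To this end I would record the factorization $\jv_{k^2}=\jv_k\otimes\jv_k$ together with the elementary facts $J_k\jv_k=k\jv_k$ and $I_k\jv_k=\jv_k$. The mixed-product property then yields $B\jv_{k^2}=J_{k^2}\jv_{k^2}=k^2\jv_{k^2}$, while $H\jv_{k^2}=(I_k\otimes J_k)(\jv_k\otimes\jv_k)=\jv_k\otimes(k\jv_k)=k\jv_{k^2}$ and, symmetrically, $V\jv_{k^2}=k\jv_{k^2}$. Finally $D\jv_{k^2}=(J_{k^2}-I_{k^2})\jv_{k^2}=(k^2-1)\jv_{k^2}$. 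Thus $\jv_{k^2}$ is an eigenvector of $B,H,V,D$ with eigenvalues $k^2,k,k,k^2-1$ respectively.

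Next I would substitute $x\otimes\jv_{k^2}$ into \eqref{eq:decomp2} and apply the mixed-product property to each summand. This turns $\Up{A}(x\otimes\jv_{k^2})$ into the sum $(L_Bx)\otimes(k^2\jv_{k^2})+(L_Hx)\otimes(k\jv_{k^2})+(L_Vx)\otimes(k\jv_{k^2})+(L_Dx)\otimes((k^2-1)\jv_{k^2})$. Pulling the scalars out of the right factor and recalling $L_D=I_{n^2}$, the common left tensor factor becomes $(k^2L_B+kL_H+kL_V)x+(k^2-1)x$. Since $x$ is by hypothesis an eigenvector of $k^2L_B+kL_H+kL_V$ with eigenvalue $\lambda$, this collapses to $(\lambda+k^2-1)x$, whence the entire expression equals $(\lambda+k^2-1)(x\otimes\jv_{k^2})$, which is the claim.

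There is no genuine obstacle here: the computation is routine once the eigenvalue data for $B,H,V,D$ on $\jv_{k^2}$ is in hand. The only point worth flagging is the bookkeeping match between the coefficients $k^2,k,k$ in the operator $k^2L_B+kL_H+kL_V$ (together with the implicit contribution of $L_D=I_{n^2}$) and the eigenvalues $k^2,k,k,k^2-1$ of the substitution blocks. This alignment is exactly what makes $x\otimes\jv_{k^2}$ an eigenvector, and it explains why the lemma is stated in terms of that particular weighted combination rather than some other one.
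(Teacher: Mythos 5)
Your proposal is correct and follows essentially the same route as the paper's own proof: expand $\Up{A}(x\otimes\jv_{k^2})$ via \eqref{eq:decomp2}, apply the mixed-product property, use that $\jv_{k^2}$ is an eigenvector of $B$, $H$, $V$, $D$ with eigenvalues $k^2$, $k$, $k$, $k^2-1$, and collect terms into $(k^2L_B+kL_H+kL_V)x+(k^2-1)x$. The only cosmetic difference is that you compute $D\jv_{k^2}=(k^2-1)\jv_{k^2}$ in one step, whereas the paper splits $D=J_{k^2}-I_{k^2}$ into two summands before recombining.
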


\begin{proof} 
\begin{align*}
&\Up{A} (x\otimes \jv_{k^2})   \\
&~~= (L_B \otimes B + L_H \otimes H + L_V \otimes V + L_D \otimes D)(x\otimes \jv_{k^2}) \\
&~~= ((L_Bx) \otimes (J_{k^2}\jv_{k^2})) + ((L_Hx) \otimes (H\jv_{k^2}))  + ((L_Vx) \otimes (V\jv_{k^2})) \\
&\hspace*{0.2\linewidth} +((L_Dx) \otimes (J_{k^2}\jv_{k^2})) - ((L_Dx) \otimes (I_{k^2}\jv_{k^2}))  \\
&~~= ((L_Bx) \otimes ({k^2}\jv_{k^2})) + ((L_Hx) \otimes (k\jv_{k^2}))  + ((L_Vx) \otimes (k\jv_{k^2})) \\
&\hspace*{0.2\linewidth} +((L_Dx) \otimes ({k^2}\jv_{k^2})) - ((L_Dx) \otimes \jv_{k^2}) \\
&~~= (({k^2}L_Bx) \otimes \jv_{k^2}) + ((kL_Hx) \otimes \jv_{k^2})  + ((kL_Vx) \otimes \jv_{k^2}) \\
&\hspace*{0.2\linewidth} +(({k^2}-1)(L_Dx) \otimes \jv_{k^2})  \\
&~~= (({k^2}L_B+kL_H+kL_V)x \otimes \jv_{k^2}) +(({k^2}-1)x \otimes \jv_{k^2}) \\
&~~= (\lambda + k^2 - 1) (x \otimes \jv_{k^2}). \\
\end{align*}
\end{proof}

Before we present the main result of this section we need one more technical lemma.

\begin{lemma} \label{l5}
Let  $\{ Y_i \}_{i=1}^n$ be a set of linearly independent vectors. Then, for any set $\{ X_i \}_{i=1}^m$ of nonzero vectors and 
any function \[\phi:\{1,2,3,\dots,n\} \to \{1,2,3,\dots,m \},\] the set $\{ X_{\phi(i)} \otimes Y_i \}_{i=1}^n$ is linearly independent.
\end{lemma}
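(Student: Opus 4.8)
The plan is to prove linear independence directly: assume a vanishing combination $\sum_{i=1}^n c_i\,(X_{\phi(i)} \otimes Y_i) = 0$ and show that every coefficient $c_i$ is zero. The crucial structural observation is that, although the vectors $X_1,\ldots,X_m$ are only assumed nonzero (so they may be linearly \emph{dependent}, and $\phi$ may be wildly non-injective, repeating the same $X_j$ against many different $Y_i$), the vectors $Y_1,\ldots,Y_n$ are linearly independent \emph{as a whole}. This steers the argument toward peeling the identity apart along the coordinates of the left tensor factor, rather than collecting terms according to the value of $\phi$.

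Concretely, for the products to lie in a common space we have all $X_j\in\RR^q$ and all $Y_i\in\RR^p$ for fixed $q,p$. Writing $X = (\xi_1,\ldots,\xi_q)^T$, the Kronecker product $X\otimes Y$ is, by Definition \ref{def:kron}, the stacked column vector whose $a$-th block of length $p$ equals $\xi_a Y$. Hence the $a$-th block of $\sum_{i=1}^n c_i\,(X_{\phi(i)}\otimes Y_i)$ is exactly $\sum_{i=1}^n c_i\,(X_{\phi(i)})_a\, Y_i$, where $(X_{\phi(i)})_a$ denotes the $a$-th entry of $X_{\phi(i)}$. First I would read off the vanishing combination block by block.

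Fixing $a\in\{1,\ldots,q\}$, the block equation reads $\sum_{i=1}^n c_i\,(X_{\phi(i)})_a\, Y_i = 0$, which is a linear relation among the independent vectors $Y_1,\ldots,Y_n$; consequently every scalar coefficient vanishes, giving $c_i\,(X_{\phi(i)})_a = 0$ for all $i$ and all $a$. Now I fix an index $i$ and invoke $X_{\phi(i)}\neq 0$: some coordinate $(X_{\phi(i)})_a$ is nonzero, and $c_i\,(X_{\phi(i)})_a = 0$ then forces $c_i = 0$. Since $i$ was arbitrary, all $c_i$ vanish and the set is linearly independent.

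I expect the only genuine obstacle to be conceptual and to arise right at the start, namely resisting the natural but doomed strategy of grouping the terms as $\sum_j X_j\otimes\bigl(\sum_{i:\phi(i)=j} c_i Y_i\bigr)$: concluding that each inner sum is zero would require the $X_j$ to be linearly independent, which is precisely what the hypothesis does not grant. Reversing the order of exploitation — using the global independence of the $Y_i$ coordinatewise in $X$ first, and the nonvanishing of each individual $X_{\phi(i)}$ only afterward — sidesteps this difficulty completely, after which the remaining steps are routine.
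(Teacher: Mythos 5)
Your proof is correct and follows essentially the same route as the paper's: expand the vanishing combination blockwise via the Kronecker product, use the linear independence of the $Y_i$ within each block to get $c_i\,(X_{\phi(i)})_a=0$ for all $i$ and $a$, then conclude $c_i=0$ from $X_{\phi(i)}\neq 0$. The only cosmetic difference is that the paper phrases the last step as a contradiction while you argue directly.
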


\begin{proof}
Suppose otherwise that
\begin{align*}
\sum\limits_{i=1}^n c_i (X_{\phi(i)} \otimes Y_i) = 0
\end{align*}
for suitable numbers $c_i\in\RR$. Let $X_{\phi(i)}=(x_{1,\phi(i)},\ldots,x_{r,\phi(i)})^T$.
By the definition of the Kronecker product we have
\begin{align*}
\sum\limits_{i=1}^n c_i (X_{\phi(i)} \otimes Y_i) = \cvec{ \sum\limits_{i=1}^n c_i x_{1,\phi(i)} Y_i \\ \vdots \\ \sum\limits_{i=1}^n c_i x_{r,\phi(i)} Y_i}=0.
\end{align*}
Thus, for each $j$ we have
\begin{align*}
\sum\limits_{i=1}^n c_i x_{j,\phi(i)} Y_i=0.
\end{align*}
But due to the linear independence of the vectors $Y_i$ we see that
$c_i x_{j,\phi(i)} = 0$
for every $j=1,\ldots,r$ and $i=1,\ldots,n$. Now assume that $c_{i^\ast}\not=0$ for some index $i^\ast$. Then
$x_{j,\phi(i^\ast)}=0$ for all $j$, therefore $X_{\phi(i^\ast)}=0$.
But this is impossible since $\{ X_i \}_{i=1}^m$ is a set of nonzero vectors.
\end{proof}

For what follows, let $\Bb_V$, $\Bb_H$ and $\Bb_M$ denote arbitrary maximal linearly independent subsets of $\Eig(L_V)$, $\Eig(L_H)$
and $\Eig(k^2 L_B + k L_V + k L_H)$, respectively.
Further, let $\Eb=\{e_1,\ldots,e_{n^2}\}$ be the standard basis of $\RR^{n^2}$, where $e_i$ denotes the $i$-th
unit vector.

\begin{theorem}\label{th:main}
Given a graph $\FSud(n,T)$, define the sets
\begin{align*}
 \Xb_V &= \{ x \otimes \jv_k \otimes y :~ x \in \Bb_V, y \in \Kb_J\} ,\\
 \Xb_H &= \{ x \otimes y \otimes \jv_k :~ x \in \Bb_H, y\in \Kb_J\} , \\
 \Xb_E &= \{ x \otimes y \otimes z :~ x\in \Eb,~ y,z \in \Kb_J\}, \\
 \Xb_M &= \{ x \otimes \jv_{k^2} :~ x \in \Bb_M\}.
\end{align*}
Then, their union $\Xb_V\cup\Xb_H\cup\Xb_E\cup\Xb_M$ forms a maximal linearly independent subset of $\Eig(\FSudUp{k}(n,T))$.
\end{theorem}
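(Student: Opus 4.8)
The plan is to establish three facts and then combine them: that every vector listed in $\Xb_V\cup\Xb_H\cup\Xb_E\cup\Xb_M$ is an eigenvector of $\Up{A}$, that the union is linearly independent, and that it has exactly $k^2n^2$ elements. Since $\Up{A}$ is an adjacency matrix it is symmetric, hence diagonalizable, so its eigenvectors span $\RR^{n^2k^2}$ and any maximal linearly independent subset of $\Eig(\Up{A})$ has exactly $k^2n^2$ elements. Thus a linearly independent family of eigenvectors of that cardinality is automatically maximal, which is what we must show.

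The eigenvector claim follows by quoting the preceding lemmas. By Proposition \ref{pro:jk}(1) the vector $\jv_k$ is an eigenvector of $J_k$ for eigenvalue $k$, so Lemma \ref{l1} applies to each $x\otimes\jv_k\otimes y\in\Xb_V$ (taking $\alpha=k$), and Lemma \ref{l2} applies to each $x\otimes y\otimes\jv_k\in\Xb_H$. Lemma \ref{l3} covers $\Xb_E$ directly since its factors lie in $\Ker(J_k)$, and Lemma \ref{l4} covers $\Xb_M$ because $\Bb_M\subseteq\Eig(k^2L_B+kL_H+kL_V)$. Hence all four families lie in $\Eig(\Up{A})$.

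The heart of the argument is linear independence, and the key observation is that $\jv_{k^2}=\jv_k\otimes\jv_k$. Writing $f_0:=\jv_k$ and letting $f_1,\ldots,f_{k-1}$ enumerate $\Kb_J$, Proposition \ref{pro:jk}(3) shows that $\{f_0,\ldots,f_{k-1}\}$ is a basis of $\RR^k$, so $\{f_a\otimes f_b\}_{a,b=0}^{k-1}$ is a basis of $\RR^{k^2}$. I would then sort the families by their last two tensor factors: $\Xb_M$ uses $f_0\otimes f_0$, the members of $\Xb_V$ use $f_0\otimes f_b$ with $b\ge 1$, those of $\Xb_H$ use $f_a\otimes f_0$ with $a\ge 1$, and those of $\Xb_E$ use $f_a\otimes f_b$ with $a,b\ge 1$. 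Every pair $(a,b)$ occurs in exactly one family, so an arbitrary linear combination of the union collects as $\sum_{a,b} v_{ab}\otimes(f_a\otimes f_b)$, where each $v_{ab}\in\RR^{n^2}$ is a combination of first factors drawn from the relevant linearly independent set ($\Bb_M$, $\Bb_V$, $\Bb_H$, or $\Eb$).

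To finish, assume this combination vanishes. Restricting to the indices with $v_{ab}\ne 0$ and invoking Lemma \ref{l5}, with the linearly independent basis vectors $f_a\otimes f_b$ in the role of the $Y_i$ and the nonzero $v_{ab}$ in the role of the $X$'s, shows the corresponding tensors are linearly independent; their sum (all coefficients equal to $1$) can then be zero only if no such index exists, forcing $v_{ab}=0$ for every $(a,b)$. As each $v_{ab}$ is a combination over a linearly independent set, all scalar coefficients vanish, so the union is linearly independent. Counting gives $|\Xb_V|+|\Xb_H|+|\Xb_E|+|\Xb_M|=n^2(k-1)+n^2(k-1)+n^2(k-1)^2+n^2=n^2k^2=\dim\Up{A}$, and maximality follows. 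The main obstacle is the bookkeeping of the third paragraph: recognizing $\jv_{k^2}=\jv_k\otimes\jv_k$ so that all four families embed into the single basis $\{f_a\otimes f_b\}$, which is precisely what makes Lemma \ref{l5} applicable and what prevents an eigenvalue-based shortcut (the four families share many eigenvalues).
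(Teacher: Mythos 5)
Your proof is correct and follows essentially the same route as the paper: eigenvector membership via Lemmas \ref{l1}--\ref{l4}, linear independence via Lemma \ref{l5}, and a dimension count to $k^2n^2$ combined with the symmetry of $\Up{A}$ for maximality. If anything, your independence step --- embedding all four families into the single tensor basis $\{f_a\otimes f_b\}$ of $\RR^{k^2}$ and applying Lemma \ref{l5} once globally --- is spelled out more carefully than the paper's, which merely asserts that the spans of the four sets are mutually disjoint.
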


\begin{proof}
By construction and Lemma \ref{l5}, each of the sets $\Xb_V$, $\Xb_H$, $\Xb_E$, $\Xb_M$ in itself
is linearly independent. Further, by construction and Proposition \ref{pro:jk}, the spans of
these four sets are mutually disjoint (neglecting the null vector). Moreover, Lemmas \ref{l1}, \ref{l2}, \ref{l3} and \ref{l5} 
guarantee that the union contains only eigenvectors of $\FSudUp{k}(n,T)$.
Finally, note that
\begin{align*}
\vert \Bb_V \vert = \vert \Bb_H \vert = \vert \Eb \vert  = n^2,~~ \vert \Kb_j \vert = k-1
\end{align*}
so that
\begin{align*}
\vert \Xb_V \vert = \vert \Xb_H \vert = (k-1)n^2,~~ \vert \Xb_E \vert = (k-1)^2n^2,~~ \vert \Xb_M \vert = n^2
\end{align*}
and thus 
\begin{align*}
\vert \Xb_V \vert + \vert \Xb_H \vert + \vert \Xb_E \vert +  \vert \Xb_M \vert = 2(k-1)n^2 + (k-1)^2n^2 + n^2 = k^2n^2.
\end{align*}
\end{proof}

Looking closer at Theorem \ref{th:main}, we see that if $L_V$, $L_H$, $k^2L_B+kL_H+kL_V$ were all integral, 
then $\FSudUp{k}(n,T)$ would be integral as well.

\begin{corollary}
Under the conditions stated in Theorem \ref{thm:fsudint}
it follows that the blown up graph $\FSudUp{k}(n,T)$ is integral for every $k$.
\end{corollary}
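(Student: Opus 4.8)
The plan is to combine the complete eigenbasis of $\Up{A}$ furnished by Theorem \ref{th:main} with the structural facts hidden in the proof of Theorem \ref{thm:fsudint}. First I would recall what that proof actually delivers under conditions (i)--(iii): not merely that $L_B$, $L_H$ and $L_V$ are individually integral, but also that the three matrices pairwise commute. The commuting of $L_B$ with each of $L_H$, $L_V$ comes from Proposition \ref{prop:regcommute} (conditions (i) and (ii) force constant row sums), while the commuting of $L_H$ and $L_V$ is exactly condition (iii). Since all three are symmetric, commuting is equivalent to simultaneous orthogonal diagonalizability, so there is a single orthonormal basis consisting of common eigenvectors.

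The first substantive step is then to upgrade this to integrality of the mixed matrix $k^2 L_B + k L_H + k L_V$. For a common eigenvector $v$ we have $L_B v = \beta v$, $L_H v = \mu v$ and $L_V v = \lambda v$ with $\beta,\mu,\lambda\in\ZZ$, the integrality of the coefficients being precisely the conclusion of Corollary \ref{cor:multintegral} applied to the individual layers. Consequently $v$ is an eigenvector of $k^2 L_B + k L_H + k L_V$ with eigenvalue $k^2\beta + k\mu + k\lambda$, which is again an integer. Running $v$ over the full common eigenbasis shows that every eigenvalue of $k^2 L_B + k L_H + k L_V$ is an integer, i.e.\ this matrix is integral as well. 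The key point here is simply that an integer-coefficient combination of commuting integral symmetric matrices stays integral; no nontrivial computation is required.

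Finally I would invoke the observation recorded immediately before the corollary, which is a direct consequence of Theorem \ref{th:main}: the union $\Xb_V\cup\Xb_H\cup\Xb_E\cup\Xb_M$ is a maximal linearly independent family of eigenvectors of $\Up{A}$, and Lemmas \ref{l1}, \ref{l2}, \ref{l3}, \ref{l4} attach to its members the eigenvalues $k\lambda-1$ for $\lambda\in\sigma(L_V)$, $k\mu-1$ for $\mu\in\sigma(L_H)$, the value $-1$, and $\nu+k^2-1$ for $\nu\in\sigma(k^2 L_B + k L_H + k L_V)$. Given the integrality of $L_V$ and $L_H$ from Theorem \ref{thm:fsudint} and the integrality of $k^2 L_B + k L_H + k L_V$ from the previous step, each of these values is an integer, so $\FSudUp{k}(n,T)$ is integral for every $k$.

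The only delicate point I would be careful about is completeness: I must be sure that these four families capture \emph{every} eigenvalue of $\Up{A}$, so that no hidden non-integer eigenvalue escapes the list. This is exactly what the cardinality count at the end of the proof of Theorem \ref{th:main} guarantees, since the union has size $k^2 n^2 = \dim\Up{A}$ and the spans of the four sets are mutually disjoint; hence the listed values exhaust the spectrum and the argument closes with no further work.
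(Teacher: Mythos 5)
Your proposal is correct and follows essentially the same route as the paper: it recovers from the proof of Theorem \ref{thm:fsudint} that $L_B$, $L_H$, $L_V$ are integral and pairwise commute, hence simultaneously diagonalizable, deduces integrality of $k^2L_B+kL_H+kL_V$, and then concludes via Theorem \ref{th:main}. You merely spell out in more detail the eigenvalue bookkeeping and the completeness count that the paper leaves to the remark preceding the corollary.
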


\begin{proof}
From the proof of Theorem \ref{thm:fsudint} it follows that
$L_V$, $L_H$ and $L_B$ are all integral and commute with each other,
hence they are simultaneously diagonizable. Consequently, $k^2L_B+kL_H+kL_V$ is integral
and therefore $\FSudUp{k}(n,T)$.
\end{proof}

Owing to Theorem \ref{th:main}, we can use Lemmas \ref{l1}, \ref{l2}, \ref{l3} and \ref{l5} to establish
the spectrum of a $k$-fold blow up from its original. 
Interestingly, we can explicitly predict that the largest eigenvalue stems from the set $\Xb_M$:

\begin{theorem}
Given a graph $\FSud(n,T)$, let $\lambda$ be the largest eigenvalue of the associated matrix $k^2 L_B + k L_V + k L_H$. Then
$\lambda + k^2-1$ is the largest eigenvalue of $\FSudUp{k}(n,T)$.
\end{theorem}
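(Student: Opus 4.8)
The plan is to determine the entire spectrum of $\Up{A}$ from the eigenbasis built in Theorem \ref{th:main}, and then to verify that the maximum is always contributed by the family $\Xb_M$. By Lemmas \ref{l1}, \ref{l2}, \ref{l3} and \ref{l4}, together with the fact from Proposition \ref{pro:jk} that $\jv_k$ is the eigenvector of $J_k$ for the eigenvalue $k$, the eigenvalues carried by the four families are as follows: each vector of $\Xb_V$ has eigenvalue $k\mu-1$ for some $\mu\in\sigma(L_V)$; each vector of $\Xb_H$ has eigenvalue $k\nu-1$ for some $\nu\in\sigma(L_H)$; every vector of $\Xb_E$ has eigenvalue $-1$; and each vector of $\Xb_M$ has eigenvalue $\theta+k^2-1$ for some $\theta\in\sigma(M)$, where I abbreviate $M:=k^2L_B+kL_H+kL_V$. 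Since Theorem \ref{th:main} certifies that these $k^2n^2$ vectors form an eigenbasis of $\Up{A}$, its largest eigenvalue is simply the largest of the four family maxima $k\,\lambda_{\max}(L_V)-1$, $k\,\lambda_{\max}(L_H)-1$, $-1$, and $\lambda_{\max}(M)+k^2-1$. It thus remains to show that the last of these dominates the other three.

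The heart of the argument is the pair of estimates $\lambda_{\max}(M)\ge k\,\lambda_{\max}(L_V)$ and $\lambda_{\max}(M)\ge k\,\lambda_{\max}(L_H)$. To prove the first, I would pick a unit eigenvector $v$ of $L_V$ for its largest eigenvalue; since $L_V$ is a symmetric nonnegative matrix, $v$ may be taken entrywise nonnegative. Evaluating the Rayleigh quotient of $M$ at $v$ gives $v^{T}Mv=k^2\,v^{T}L_Bv+k\,v^{T}L_Hv+k\,\lambda_{\max}(L_V)$. The matrices $L_B$ and $L_H$ have nonnegative entries and $v\ge 0$, so the two cross terms $v^{T}L_Bv$ and $v^{T}L_Hv$ are nonnegative; hence $v^{T}Mv\ge k\,\lambda_{\max}(L_V)$, and the variational characterization of the top eigenvalue yields $\lambda_{\max}(M)\ge v^{T}Mv\ge k\,\lambda_{\max}(L_V)$. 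Repeating the computation with a nonnegative top eigenvector of $L_H$ gives the companion bound. Finally, $M$ is itself a nonnegative symmetric matrix, so $\lambda_{\max}(M)\ge 0$.

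Combining these facts closes the proof: $\lambda_{\max}(M)+k^2-1\ge k\,\lambda_{\max}(L_V)+k^2-1>k\,\lambda_{\max}(L_V)-1$ and, symmetrically, $\lambda_{\max}(M)+k^2-1>k\,\lambda_{\max}(L_H)-1$, while $\lambda_{\max}(M)+k^2-1\ge k^2-1\ge -1$. Hence the overall maximum is attained on $\Xb_M$ and equals $\lambda+k^2-1$ with $\lambda=\lambda_{\max}(M)$, which is exactly the assertion.

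The step I expect to be the main obstacle is the estimate $\lambda_{\max}(M)\ge k\,\lambda_{\max}(L_V)$: the matrix $M$ combines three summands that in general neither commute nor share an eigenbasis, so one cannot simply add top eigenvalues. What rescues the bound is that $L_B$, $L_H$, $L_V$ are genuine (nonnegative) adjacency matrices and that the top eigenvector of $L_V$ can be chosen nonnegative, which forces the unwanted cross terms to carry the right sign. A more conceptual route, which avoids listing the spectrum altogether, is to observe that $M$ has the same connected underlying graph as $A=L_B+L_H+L_V$ (any two cells in a common row or column are adjacent, so the graph has diameter at most two for $n\ge 2$); Perron--Frobenius then furnishes a strictly positive eigenvector $u$ of $M$ for $\lambda_{\max}(M)$, whence $u\otimes\jv_{k^2}$ is by Lemma \ref{l4} a strictly positive eigenvector of the connected graph $\FSudUp{k}(n,T)$ with eigenvalue $\lambda_{\max}(M)+k^2-1$, and Perron--Frobenius forces this positive eigenvector to belong to the spectral radius. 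Either route yields the claim.
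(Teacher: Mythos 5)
Your proof is correct, but it reaches the conclusion by a genuinely different route than the paper. Both arguments start the same way: the eigenbasis from Theorem \ref{th:main} together with Lemmas \ref{l1}--\ref{l4} reduces the problem to showing that $\lambda_{\max}(M)+k^2-1$ dominates $k\,\lambda_{\max}(L_V)-1$, $k\,\lambda_{\max}(L_H)-1$ and $-1$, where $M=k^2L_B+kL_H+kL_V$. The paper then argues quantitatively: it renumbers vertices so that $L_B=I_n\otimes J_n-I_{n^2}$, observes that (by the mutual exclusivity of the cases B, H, V) $M$ contains $k^2(J_n-I_n)$ as a principal submatrix, and uses eigenvalue interlacing to get $\lambda_{\max}(M)\ge (n-1)k^2$, hence a contribution of at least $nk^2-1$ from $\Xb_M$; it then bounds the $\Xb_V$ and $\Xb_H$ contributions above by $(n-1)k-1$ via the maximum-row-sum bound on $L_H$, $L_V$. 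You instead compare $\lambda_{\max}(M)$ directly against $k\,\lambda_{\max}(L_V)$ and $k\,\lambda_{\max}(L_H)$ by evaluating the Rayleigh quotient of $M$ at a nonnegative top eigenvector of $L_V$ (resp.\ $L_H$) and discarding the nonnegative cross terms. Your route avoids the vertex renumbering and any explicit knowledge of $L_B$, and it yields the sharper and more structural inequality $\lambda_{\max}(M)\ge k\,\lambda_{\max}(L_V)$, which would survive under more general decompositions into nonnegative summands; the paper's route buys an explicit numerical lower bound $nk^2-1$ on the spectral radius of $\FSudUp{k}(n,T)$, which is of independent interest. Your closing Perron--Frobenius observation (a strictly positive eigenvector of the connected graph $\FSudUp{k}(n,T)$ must be the Perron vector, so $\Up{x}=u\otimes\jv_{k^2}$ from Lemma \ref{l4} already identifies the largest eigenvalue without inspecting the other three families at all) is the cleanest argument of the three and is not in the paper. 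One small point worth making explicit in your write-up: the nonnegative top eigenvector of $L_V$ exists because $L_V$ is a symmetric entrywise-nonnegative matrix, so replacing a Rayleigh maximizer $v$ by $|v|$ does not decrease the quotient; $L_V$ is in general reducible, so you should not appeal to irreducible Perron--Frobenius there.
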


\begin{proof}
For the purposes of this proof we renumber  the vertices of $\FSud(n,T)$ such that we sequentially number the vertices with one block, then continue
with the next block and so on. The order in which the vertices are numbered with a single block is arbitrary.
With respect to this vertex order the matrix $L_B$ assumes the form $I_n\otimes J_n - I_{n^2}$.
Clearly, this matrix contains $J_n-I_n$ as a principal submatrix. Consequently,
the matrix $M:=k^2 L_B + k L_V + k L_H$ contains the matrix $k^2J_n-k^2I_n$ as a principal submatrix,
the latter having maximum eigenvalue $k^2(n-1)$. By virtue of eigenvalue interlacing (see e.g.\ \cite{spectra})
we conclude that $\lambda > (n-1)k^2$.
So, according to Lemma \ref{l4}, the largest eigenvalue of $\FSudUp{k}(n,T)$ originating from the set $\Xb_M$ is at least
$(n-1)k^2 + k^2 -1 = nk^2-1$.
We will now show that the largest eigenvalues originating from $\Xb_V$, $\Xb_H$ and $\Xb_B$ are smaller than this bound.

Since the largest eigenvalue of a matrix is bounded from above by the maximum row sum of the matrix it is clear that
the maximum eigenvalue both of $L_H$ and $L_V$ can be at most $n-1$.
Now recall from Proposition \ref{pro:jk} that $k$ is the maximum eigenvalue of $J_k$.
Combining these findings, it now follows from Lemmas \ref{l1} and \ref{l2} that 
none of the eigenvalues associated with the vectors of the sets $\Xb_V$ and $\Xb_H$ exceeds $(n-1)k-1$,
which is less than the lower bound given for $\Xb_M$.
Finally, Lemma \ref{l3} tells us that no positive eigenvalue of  $\FSudUp{k}(n,T)$ originates from $\Xb_B$.
\end{proof}

\section{Conclusion}

Up to now, it seems that free-form Sudokus have not been researched at all.
Providing a starting point, we have studied integrality of these graphs. Moreover,
we have presented the blow up operation and 
shown how to obtain the eigenvalues of blown up free-form Sudokus from their originals.
We would like to 
inspire more research on this topic, in particular regarding further spectral properties of free-form Sudoku graphs.
Let us therefore close with the following open questions:
\begin{enumerate}
\item Can we find a precise condition on the tiling that allows us to predict when exactly a
free-form Sudoku graph is integral or not?
\item If a given Sudoku is integral, is its blown up version always integral as well?
\item Can a blown up free-form Sudoku be integral although its original Sudoku is not?
\end{enumerate}


\bibliographystyle{acm}            
\bibliography{freeform_sudoku}  

\begin{thebibliography}{10}

\bibitem{zbMATH06475256}
{\sc {Arnold}, E., {Field}, R., {Lorch}, J., {Lucas}, S., and {Taalman}, L.}
\newblock {Nest graphs and minimal complete symmetry groups for magic Sudoku
  variants.}
\newblock {\em {Rocky Mt. J. Math.} 45}, 3 (2015), 887--901.

\bibitem{zbMATH06052119}
{\sc {Cameron}, P.~J., {Hilton}, A. J.~W., and {Vaughan}, E.~R.}
\newblock {An analogue of Ryser's theorem for partial Sudoku squares.}
\newblock {\em {J. Comb. Math. Comb. Comput.} 80\/} (2012), 47--69.

\bibitem{zbMATH06759817}
{\sc {Chen}, H., and {Cooper}, C.}
\newblock {Solving sudoku: structures and strategies.}
\newblock {\em {Missouri J. Math. Sci.} 29}, 1 (2017), 12--18.

\bibitem{zbMATH06246071}
{\sc {Cooper}, J., and {Kirkpatrick}, A.}
\newblock {Critical sets for Sudoku and general graph colorings.}
\newblock {\em {Discrete Math.} 315-316\/} (2014), 112--119.

\bibitem{spectra}
{\sc {Cvetkovi\'c}, D.~M., {Doob}, M., and {Sachs}, H.}
\newblock {\em {Spectra of graphs. Theory and applications. 3rd rev. a. enl.
  ed.}}, 3rd rev. a. enl. ed.~ed.
\newblock Leipzig: J. A. Barth Verlag, 1995.

\bibitem{zbMATH03908461}
{\sc {Cvetkovi\'c}, D.~M., and {Petri\'c}, M.~V.}
\newblock {Connectedness of the non-complete extended p-sum of graphs.}
\newblock {\em {Zb. Rad., Prir.-Mat. Fak., Univ. Novom Sadu, Ser. Mat.} 13\/}
  (1983), 345--352.

\bibitem{CD}
{\sc {Delorme}, C.}
\newblock {Eigenvalues of complete multipartite graphs.}
\newblock {\em {Discrete Math.} 312}, 17 (2012), 2532--2535.

\bibitem{zbMATH06142305}
{\sc {Deng}, X.~Q., and {Da Li}, Y.}
\newblock {A novel hybrid genetic algorithm for solving Sudoku puzzles.}
\newblock {\em {Optim. Lett.} 7}, 2 (2013), 241--257.

\bibitem{zbMATH06751283}
{\sc {D'haeseleer}, J., {Metsch}, K., {Storme}, L., and {Van de Voorde}, G.}
\newblock {On the maximality of a set of mutually orthogonal sudoku Latin
  squares.}
\newblock {\em {Des. Codes Cryptography} 84}, 1-2 (2017), 143--152.

\bibitem{EM}
{\sc {Elsholtz}, C., and {M\"utze}, A.}
\newblock {Sudoku im Mathematikunterricht.}
\newblock {\em {Math. Semesterber.} 54}, 1 (2007), 69--93.

\bibitem{zbMATH05932384}
{\sc {Evans}, R., {Lindner}, B., and {Shi}, Y.}
\newblock {Generating sudoku puzzles and its applications in teaching
  mathematics.}
\newblock {\em {Int. J. Math. Educ. Sci. Technol.} 42}, 5 (2011), 697--704.

\bibitem{zbMATH06322940}
{\sc {Fontana}, R.}
\newblock {Random Latin squares and Sudoku designs generation.}
\newblock {\em {Electron. J. Stat.} 8}, 1 (2014), 883--893.

\bibitem{HJ}
{\sc Horn, R.~A., and Johnson, C.~R.}
\newblock {\em {Topics in Matrix Analysis}}.
\newblock Cambridge University Press, Cambridge, 1991.

\bibitem{zbMATH05994814}
{\sc {Iv\'anyi}, A., and {N\'emeth}, Z.}
\newblock {List coloring of Latin and Sudoku graphs.}
\newblock In {\em {8th joint conference on mathematics and computer science,
  MaCS 2010, Kom\'arno, Slovakia, July 14--17, 2010. Selected papers}}. Gy\H
  or: NOVADAT, 2011, pp.~23--34.

\bibitem{zbMATH06172103}
{\sc {Jones}, S.~K., {Perkins}, S., and {Roach}, P.~A.}
\newblock {The structure of reduced Sudoku grids and the Sudoku symmetry
  group.}
\newblock {\em {Int. J. Comb.} 2012\/} (2012), 6 pages.

\bibitem{zbMATH05823702}
{\sc {Kanaana}, I., and {Ravikumar}, B.}
\newblock {Row-filled completion problem for Sudoku.}
\newblock {\em {Util. Math.} 81\/} (2010), 65--84.

\bibitem{zbMATH06092025}
{\sc {Keedwell}, A.~D.}
\newblock {Confirmation of a conjecture concerning orthogonal Sudoku and
  bimagic squares.}
\newblock {\em {Bull. Inst. Comb. Appl.} 63\/} (2011), 39--47.

\bibitem{semester}
{\sc {Klotz}, W., and {Sander}, T.}
\newblock {Wie kommt Sudoku zu ganzzahligen Eigenwerten?}
\newblock {\em {Math. Semesterber.} 57}, 2 (2010), 169--183.

\bibitem{zbMATH06619799}
{\sc {Lorch}, J.}
\newblock {Constructing ordered orthogonal arrays via sudoku.}
\newblock {\em {J. Algebra Appl.} 15}, 8 (2016), 19 pages.

\bibitem{zbMATH06328244}
{\sc {McGuire}, G., {Tugemann}, B., and {Civario}, G.}
\newblock {There is no 16-clue sudoku: solving the sudoku minimum number of
  clues problem via hitting set enumeration.}
\newblock {\em {Exp. Math.} 23}, 2 (2014), 190--217.

\bibitem{takingsud}
{\sc {Rosenhouse}, J., and {Taalman}, L.}
\newblock {\em {Taking Sudoku seriously. The math behind the world's most
  popular pencil puzzle.}}
\newblock Oxford University Press, 2011.

\bibitem{sudejc}
{\sc {Sander}, T.}
\newblock {Sudoku graphs are integral.}
\newblock {\em {Electron. J. Comb.} 16}, 1 (2009), research paper N25, 7 pages.

\bibitem{zbMATH06288901}
{\sc {Smith}, J.~D.}
\newblock {Palindromic and s\={u}doku quasigroups.}
\newblock {\em {J. Comb. Math. Comb. Comput.} 88\/} (2014), 85--94.

\end{thebibliography}

\end{document}